\newtheorem{theorem}{Theorem}[section]
\newtheorem{thm}[theorem]{Theorem}
\newtheorem{lemma}[theorem]{Lemma}
\newtheorem{lem}[theorem]{Lemma}
\newtheorem{prop}[theorem]{Proposition}
\theoremstyle{definition}
\newtheorem{defn}[theorem]{Definition}
\newtheorem{remark}[theorem]{Remark}
\newtheorem{rem}[theorem]{Remark}
\newtheorem{exam}[theorem]{Example}
\def\co{\colon \thinspace}
\newcommand{\GL}{\textrm{GL}}
\newcommand{\Z}{\mathbb{Z}}
\newcommand{\R}{\mathbb{R}}
\newcommand{\C}{\mathbb{C}}
\newcommand{\mL}{\mathcal{L}}
\newcommand{\tC}{\widetilde{C}}
\newcommand{\tSigma}{\widetilde{\Sigma}}
\newcommand{\LWcr}{\textsf{LWcr}}
\newcommand{\SWcr}{\textsf{SWcr}}
\newcommand{\Wcr}{\textsf{Wcr}}
\title[Linear representations and dual Garside length of braids]{Lawrence-Krammer-Bigelow representations and dual Garside length of braids}
\author{Tetsuya Ito}
\address{Graduate School of Mathematical Science, University of Tokyo, 3-8-1 Komaba Meguro-ku Tokyo 153-8914, Japan}
\email{tetitoh@ms.u-tokyo.ac.jp}
\urladdr{http://ms.u-tokyo.ac.jp/~tetitoh/}
\author{Bert Wiest}
\address{IRMAR, UMR 6625 du CNRS, Universit\'e de Rennes 1, Campus de Beaulieu, 35042 Rennes Cedex, France}
\email{bertold.wiest@univ-rennes1.fr}
\urladdr{http://perso.univ-rennes1.fr/bertold.wiest}
\subjclass[2010]{Primary~20F36, Secondary~20F10,57M07}
\keywords{Lawrence-Krammer-Bigelow representation, Braid group, curve diagram, dual Garside length}
\begin{document}

\begin{abstract} 
We show that the span of the variable $q$ in the Lawrence-Krammer-Bigelow representation matrix of a braid is equal to the twice of the dual Garside length of the braid, as was conjectured by Krammer.
Our proof is close in spirit to Bigelow's geometric approach. The key observation is that the dual Garside length of a braid can be read off a certain labeling of its curve diagram. 
\end{abstract}
 \maketitle

\section{Introduction}

The question whether the braid group $B_{n}$ is linear or not was a long standing problem.
At the end of the 20th century, the problem was solved affirmatively by Krammer \cite{k2} and Bigelow \cite{b2} independently.
They showed that a certain linear representation of the braid group first constructed by Lawrence \cite{l} and now called the {\em Lawrence-Krammer-Bigelow representation} (LKB representation, for short) is faithful.
Interestingly, the two proofs of the faithfulness of the LKB representations are completely different: Bigelow's proof is geometric whereas Krammer's proof is algebraic.

Our main result is (Theorem~\ref{thm:q}) that the span of the variable $q$ in the image of the LKB representation of a braid~$\beta$ is equal to twice the dual Garside length of~$\beta$, as conjectured by Krammer in \cite{k1}. 
This is an analogue of Krammer's theorem \cite{k2} that the span of the variable $t$ in the image of the LKB representation is equal to the classical Garside length. It is remarkable that, despite the analogy with Krammer's result, our proof is rather based on Bigelow's techniques.

One of our main tools is a labeling of curve diagrams of braids called the {\em wall crossing labeling}. In Theorem \ref{thm:wall}, we will prove that the wall crossing labeling tells us the dual Garside length of braids. On the other hand, in Lemma~\ref{lem:Wcr=q} we will observe that the wall crossing labeling is also related to the variable $q$ in the noodle-fork pairing, a homological intersection pairing appearing in the LKB representation. 
Thus wall crossing labelings of curve diagrams serve as a bridge connecting two seemingly unrelated objects, namely the LKB representation and the dual Garside structure.
%These observations relate three seemingly different objects in the braid groups: the LKB representation, the wall crossing labeling of curve diagrams, and the Garside structure.

In order to state our main Theorem, we set up some notation.
In this paper we denote the LKB representation by 
\[ \mL\co B_{n} \rightarrow \GL \left(\textstyle{\frac{n(n-1)}{2}}; \Z[q^{\pm 1},t^{\pm 1}]\right) \] 
The matrix representative $\mL(\beta)$ depends on the choice of the basis, and there are variations of the matrices of the LKB representation: all matrices given in the three papers \cite{b2},\cite{k1},\cite{k2} are different!

In this paper we use Bigelow's expression of the matrices given in \cite[Theorem~4.1]{b2}, with the correction of a sign error from \cite{b3}. This expression has geometric origin, and is essentially the same as the matrices in \cite{k1}, except that the variable $t$ of \cite{k1} corresponds to $-t$ of \cite{b2}.

Let us take the basis $\{F_{i,j}\}_{1\leq i<j \leq n}$ of $\R^{n(n-1)/2}$, defined by the {\em standard forks} -- see Section 2 for details. 
Using this basis, the matrix representative of the LKB representative is given as follows.
\[
[\mL(\sigma_{i})](F_{j,k}) = 
\left\{ \begin{array}{ll}
F_{j,k} & i \not \in \{ j-1,j,k-1,k \}\\
qF_{i,k} + (q^{2}-q)F_{i,j} + (1-q)F_{j,k} & i=j-1\\
F_{j+1,k} & i=j\neq k-1\\
qF_{j,i} + (1-q)F_{j,k} +(q-q^{2})tF_{i,k} & i=k-1\neq j\\
F_{j,k+1} & i=k\\
-q^{2}tF_{j,k} & i=j=k-1
\end{array}
\right.
\]
The image $\mL(\beta)$ is a $\frac{n(n-1)}{2} \times \frac{n(n-1)}{2}$-matrix whose entries are in $\Z[q^{\pm 1},t^{\pm 1}]$.
For a Laurent polynomial $a \in \Z[q^{\pm 1}, t^{\pm 1}]$, let $M_{q}(a)$ and $m_{q}(a)$ be the maximal and the minimal degree of the variable $q$, respectively.
For a matrix $A =(a_{\iota,\kappa}) \in \GL (\frac{n(n-1)}{2}; \Z[q^{\pm 1},t^{\pm 1}])$ we define
\[ M_{q}(A)=\max_{1\leqslant \iota,\kappa\leqslant \frac{n(n-1)}{2}}\{M_{q}(a_{\iota,\kappa})\}, \;\;\;\; m_{q}(A) = \min_{1\leqslant \iota,\kappa\leqslant \frac{n(n-1)}{2}}\{m_{q}(a_{\iota,\kappa})\} . \] 

We denote the supremum, the infimum and the length function of a braid~$\beta$ for the dual Garside structure by $\inf_{\Sigma^{*}}(\beta)$, $\sup_{\Sigma^{*}}(\beta)$, and $l_{\Sigma^{*}}(\beta)$, respectively -- see Section \ref{sec:Garside} for precise definitions.

The main result in this paper is the following, which was already conjectured by Krammer in \cite{k1}.

\begin{thm}[Variable $q$ in the LKB representation and the dual Garside length]
\label{thm:q}
For $\beta \in B_{n}$,
\begin{enumerate}
\item $2\sup_{\Sigma^{*}}(\beta) = M_{q}(\mL(\beta))$.
\item $2\inf_{\Sigma^{*}}(\beta) = m_{q}(\mL(\beta))$.
\item $2l_{\Sigma^{*}}(\beta) = \max\{0,M_{q}(\mL(\beta))\} - \min\{0,m_{q}(\mL(\beta))\}$.
\end{enumerate}
\end{thm}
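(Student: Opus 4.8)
The plan is to build the proof on the two structural results advertised in the introduction, namely Theorem~\ref{thm:wall} (the wall crossing labeling computes the dual Garside supremum and infimum) and Lemma~\ref{lem:Wcr=q} (the wall crossing labeling records the $q$-degree in the noodle--fork pairing). The strategy is to show that the $q$-degrees $M_q(\mL(\beta))$ and $m_q(\mL(\beta))$ are \emph{read off} the extreme values of the wall crossing labeling on the curve diagram of $\beta$, and that these same extreme values equal $2\sup_{\Sigma^*}(\beta)$ and $2\inf_{\Sigma^*}(\beta)$. Concretely, for each basis fork $F_{j,k}$ the entries of $\mL(\beta)$ in that row (or column) are the noodle--fork pairings of $\beta$ applied to the standard forks against the standard noodles; by Lemma~\ref{lem:Wcr=q} the maximal and minimal powers of $q$ occurring in these pairings are governed by the maximal and minimal wall crossing labels along the arcs of $\beta(D)$, where $D$ is the trivial curve diagram.

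First I would set up the dictionary between matrix entries and homological pairings: the $(\iota,\kappa)$ entry of $\mL(\beta)$ is $\langle N_\iota, \beta F_\kappa\rangle$ for the standard noodles $N_\iota$ and standard forks $F_\kappa$, so that $M_q(\mL(\beta)) = \max_{\iota,\kappa} M_q\langle N_\iota, \beta F_\kappa\rangle$ and similarly for $m_q$. Next I would invoke Lemma~\ref{lem:Wcr=q} to replace each $M_q\langle N_\iota,\beta F_\kappa\rangle$ and $m_q\langle N_\iota,\beta F_\kappa\rangle$ by the corresponding extreme wall crossing labels of the curve diagram $\beta(D)$, taking the maximum (resp.\ minimum) over all noodle--fork pairs. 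The key combinatorial point is that ranging over all standard noodles and forks realizes the full range of wall crossing labels appearing on $\beta(D)$, so that $M_q(\mL(\beta))$ equals the maximal wall crossing label $\LWcr$ (or the appropriate variant $\SWcr/\Wcr$) and $m_q(\mL(\beta))$ equals the minimal one. I would then apply Theorem~\ref{thm:wall} to identify these extreme labels with $2\sup_{\Sigma^*}(\beta)$ and $2\inf_{\Sigma^*}(\beta)$, yielding parts (1) and (2). Part (3) follows formally from (1) and (2) together with the standard dual Garside identity $l_{\Sigma^*}(\beta) = \max\{0,\sup_{\Sigma^*}(\beta)\} - \min\{0,\inf_{\Sigma^*}(\beta)\}$, after noting that $l_{\Sigma^*}$ accounts for the possibility that $\beta$ or $\beta^{-1}$ is trivial.

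The main obstacle I expect is the inequality in the \emph{realization} direction, that is, showing that no cancellation in the homological pairing destroys the extreme $q$-powers predicted by the wall crossing labels. The noodle--fork pairing is a sum over intersection points of $\beta(D)$ with the noodles, with signs and monomial weights in $q$ and $t$; a priori the term carrying the maximal power of $q$ could cancel against another term of the same $q$-degree but, crucially, such terms may differ in their $t$-weight or sign. The delicate step is therefore to argue that the extremal-$q$ contributions survive: I would isolate, for a suitably chosen noodle--fork pair adapted to the geometry of $\beta(D)$, a single intersection point (or a group of intersection points of common sign) achieving the maximal wall crossing label, and show its monomial cannot be cancelled—either because it is the unique term of its bidegree in $(q,t)$, or because the signs of same-bidegree terms agree. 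This non-cancellation argument, which transfers the purely combinatorial content of Theorem~\ref{thm:wall} into an algebraic statement about the surviving leading coefficient, is where the real work lies; the reduction of (1) and (2) to it, and of (3) to (1) and (2), is then formal.
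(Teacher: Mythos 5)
Your overall architecture contains a false step at its foundation: the ``dictionary'' identifying matrix entries with noodle--fork pairings. The $(\iota,\kappa)$ entry of $\mL(\beta)$ is \emph{not} $\langle N_\iota,\beta F_\kappa\rangle$. For one thing, the count is wrong: there are only $n$ standard noodles but $\frac{n(n-1)}{2}$ rows. More fundamentally, the pairing of a standard noodle with a standard fork is not a Kronecker delta: Lemma~\ref{lem:standpair} computes $\langle N_k,F_{i,j}\rangle = qt+1-t-q^{-1}$ when $i<k<j$, so already for $\beta=1$ your dictionary would give $M_q = 1$ while $\mL(1)$ is the identity matrix with $M_q=0$. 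There is also a parity obstruction: by Lemma~\ref{lem:Wcr=q} the $q$-exponents at the relevant intersection points are the \emph{odd} numbers $2\Wcr(z_i)\pm 1$, so if entries were pairings then $M_q(\mL(\beta))$ would be odd, contradicting the evenness of $2\sup_{\Sigma^*}(\beta)$ asserted by the theorem. The paper's actual mechanism uses the noodle only as a test functional: writing $\beta(F)=\sum x_{i,j}F_{i,j}$ for a \emph{straight} fork $F$ (so that the coefficients $x_{i,j}$ are honest matrix entries), linearity gives $\langle N,\beta(F)\rangle=\sum x_{i,j}\langle N,F_{i,j}\rangle$, and since each $\langle N,F_{i,j}\rangle$ has $q$-degree at most $1$ (Lemma~\ref{lem:standpair}), a pairing of $q$-degree $2\LWcr(\beta)+1$ forces some entry of $q$-degree at least $2\LWcr(\beta)=2\sup_{\Sigma^*}(\beta)$; the discrepancy of $1$ is exactly absorbed here. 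Producing such a pairing is the content of Lemma~\ref{lem:Bigelow}, and the non-cancellation worry you correctly flag is settled there by citing Bigelow's Key Lemma~\ref{lem:non-vanishing_q} (extremal monomials in the $<_{q,t}$-order all carry the same sign) --- so your third paragraph is the right instinct, but note that this whole mechanism yields only the \emph{lower} bound $M_q(\mL(\beta))\geq 2\sup_{\Sigma^*}(\beta)$.

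The second gap is that your plan has no mechanism at all for the converse inequality $M_q(\mL(\beta))\leq 2\sup_{\Sigma^*}(\beta)$. Your assertion that ranging over all noodles and forks ``realizes the full range of wall crossing labels'' would at best control pairings, which (the dictionary being false) says nothing about matrix entries; and even as a statement about pairings it is not justified, since Lemma~\ref{lem:Wcr=q} controls only the diagonal intersection points $\{z_i,z_i'\}$ of straight forks, not the off-diagonal contributions $\{z_i,z_j'\}$. The paper's upper bound is proved by a completely different, purely algebraic argument: every dual-simple element $s\in[1,\delta]$ satisfies $M_q(\mL(s))\leq 2$, and $M_q$ is subadditive under matrix multiplication, so expressing $\beta$ through $\sup_{\Sigma^*}(\beta)$ dual-simple factors bounds $M_q(\mL(\beta))$ from above. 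Without this (or some substitute), parts (1) and (2) are only half-proved; your reduction of (3) to (1), (2) and Proposition~\ref{prop:dGlength} is fine.
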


\section*{Acknowledgement}

The first author was supported by the JSPS Institutional Program for Young Researcher Overseas Visits.

\section{Preliminaries}
First of all we set up our notation and conventions.
Let $D^{2}=\{z \in \C \: | \: |z| \leqslant 1\}$ be the unit disc in the complex plane and $D_{n} = D^{2} - \{p_{1},\ldots,p_{n}\}$ be the $n$-punctured disc, where each puncture $p_{i}$ is put on the real line so that $-1 < p_{1}<p_{2}< \cdots <p_{n} < 1$ holds. The braid group $B_{n}$ is identified with the mapping class group of~$D_{n}$.

Thoroughout this paper, we adopt the following conventions. Braids acts on the {\em left}. A positive standard generator $\sigma_{i}$ is identified with the {\em left-handed}, that is, the {\em clockwise} half Dehn-twist which interchanges the punctures $p_{i}$ and $p_{i+1}$.
Also, when calculating winding numbers, the positive direction of winding is taken as the {\em clockwise direction}. This convention is the opposite to Bigelow's.

\subsection{The dual Garside length}
\label{sec:Garside}

First we review the definition of the supremum, the infimum and the length functions of the dual Garside structure. We do not need detailed Garside theory such as normal forms or algorithms. We only need a length formula, Proposition \ref{prop:dGlength} below.

Recall that the braid group $B_{n}$ is presented as
\[ B_{n} = 
\left \langle 
\sigma_{1},\ldots,\sigma_{n-1}  \;
\begin{array}{|ll}
 \sigma_{i}\sigma_{j}\sigma_{i}=\sigma_{i}\sigma_{j}\sigma_{i}, &|i-j|=1 \\
\sigma_{i}\sigma_{j}=\sigma_{j}\sigma_{i}, & |i-j|>1 
\end{array}
\right\rangle \]

The elements of the generating set $\Sigma=\{\sigma_{1},\ldots,\sigma_{n-1}\}$ are called {\em Artin generators} or {\em Garside generators}.

In the dual Garside structure, we use a slightly different generating set which contains $\Sigma$ as a subset.
For $1\leqslant i < j \leqslant n$, let $a_{i,j}$ be the braid
\[ a_{i,j} = (\sigma_{i}\sigma_{i+1}\ldots \sigma_{j-2}) \sigma_{j-1} (\sigma_{i}\sigma_{i+1}\ldots \sigma_{j-2})^{-1}\] 

The generating set $\Sigma^{*} =\{a_{i,j}\: | \: 1\leqslant i < j \leqslant n\}$ was introduced in \cite{bkl}, and its elements are called the {\em dual Garside generators}, or {\em band generators}, or {\em Birman-Ko-Lee generators}.
 In our conventions, the braid $a_{i,j}$ is represented by the left-handed half Dehn-twist along an arc connecting $p_{i}$ and $p_{j}$ in the lower-half of the disc $\{z \in D^{2}\: | \: \textrm{Im}\,z <0 \}$.

A {\em dual-positive braid} is a braid which is written by a product of positive dual Garside generator $\Sigma^{*}$. The set of dual-positive braids is denoted by $B_{n}^{+*}$.
The {\em dual Garside element} is a braid $\delta$ given by
\[\delta= a_{1,2}a_{2,3}\ldots a_{n-1,n} \]

Let $\preccurlyeq_{\Sigma^{*}}$ be the subword partial ordering with respect to the dual Garside generating set~$\Sigma^{*}$: $\beta_{1} \preccurlyeq_{\Sigma^{*}} \beta_{2}$ if and only if $\beta_{1}^{-1}\beta_{2} \in B_{n}^{+*}$.
For a given braid $\beta$, the supremum $\sup_{\Sigma^{*}}(\beta)$ and the infimum $\inf_{\Sigma^{*}}(\beta)$ is defined by
\[ \sup\!{}_{\Sigma^{*}}(\beta) = \min \{ m \in \Z \: | \: \beta \preccurlyeq_{\Sigma^{*}} \delta^{m} \} \]
and 
\[ \inf\!{}_{\Sigma^{*}}(\beta) = \max \{ M \in \Z \: | \: \delta^{M} \preccurlyeq_{\Sigma^{*}} \beta \}\]
respectively.
A \emph{dual-simple} element is a dual-positive braid $s$ which satisfies $1 \preccurlyeq_{\Sigma^{*}} s \preccurlyeq_{\Sigma^{*}} \Delta$. The set of dual-simple element is denoted by $[1,\delta]$.
The Garside length $l_{\Sigma^{*}}$ is the length function with respect to the generating set $[1,\delta]$. 

The next formula relates the supremum, infimum and the length.

\begin{prop}[Dual Garside length]\label{prop:dGlength}
For a braid $\beta \in B_{n}$ we have the following equality
\[ l_{\Sigma^{*}}(\beta) = \max\{0,\sup\!{}_{\Sigma^{*}}(\beta)\} - \min\{ \inf\!{}_{\Sigma^{*}}(\beta),0\}. \]
\end{prop}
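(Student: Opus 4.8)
The plan is to prove the formula
\[ l_{\Sigma^{*}}(\beta) = \max\{0,\sup\!{}_{\Sigma^{*}}(\beta)\} - \min\{ \inf\!{}_{\Sigma^{*}}(\beta),0\} \]
by unwinding the definitions of the three quantities involved and relating the length with respect to the generating set $[1,\delta]$ to the powers of $\delta$ that bound $\beta$ from above and below in the subword ordering $\preccurlyeq_{\Sigma^{*}}$. First I would recall that $\delta$ itself is a dual-simple element, so that $\delta^{m}$ has $[1,\delta]$-length at most $m$ for $m \geqslant 0$, and more generally any dual-positive braid bounded above by $\delta^{m}$ can be factored into at most $m$ dual-simple pieces. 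This is the standard left-greedy (or symmetric) normal form theory for the Garside monoid underlying $\Sigma^{*}$: every $\beta \in B_{n}$ admits a normal form $\delta^{p} s_{1}\cdots s_{r}$ where each $s_{i} \in [1,\delta]\setminus\{1,\delta\}$ is a proper dual-simple factor, $p = \inf_{\Sigma^{*}}(\beta)$, and $p+r = \sup_{\Sigma^{*}}(\beta)$. I would invoke this normal form as the structural backbone, since it simultaneously computes the infimum, the supremum, and a canonical way of writing $\beta$ as a product of simple elements and their inverses.

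Next I would carry out the computation in two cases according to the sign of the infimum. If $\inf_{\Sigma^{*}}(\beta) \geqslant 0$, then $\beta$ is dual-positive and its normal form $\delta^{p}s_{1}\cdots s_{r}$ uses $p$ copies of $\delta$ together with $r$ proper simple factors, all of them in $[1,\delta]$, so the $[1,\delta]$-length is exactly $p+r = \sup_{\Sigma^{*}}(\beta)$; since the infimum is nonnegative, the formula reduces to $l_{\Sigma^{*}}(\beta)=\sup_{\Sigma^{*}}(\beta)$, which matches. Symmetrically, if $\sup_{\Sigma^{*}}(\beta)\leqslant 0$ then $\beta^{-1}$ is dual-positive with $\sup_{\Sigma^{*}}(\beta^{-1}) = -\inf_{\Sigma^{*}}(\beta)$, and since inversion preserves the word length with respect to $[1,\delta]$ (because $[1,\delta]$ is closed under the symmetry $s \mapsto \delta s^{-1}$, or simply because inverting a product of generators inverts each factor), the length equals $-\inf_{\Sigma^{*}}(\beta)$, again matching. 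The genuinely mixed case is $\inf_{\Sigma^{*}}(\beta) < 0 < \sup_{\Sigma^{*}}(\beta)$, where I would write $\beta = \delta^{p}s_{1}\cdots s_{r}$ with $p<0$ and $p+r>0$, split off the $|p|$ negative powers of $\delta$ (each contributing $\delta^{-1}$, a single element of the symmetric generating set) from the $r$ positive simple factors, and conclude that the length is at most $|p| + r = -p + (\sup_{\Sigma^{*}}(\beta)-p) $; care is needed here, and I would instead argue directly that the optimal factorization separates the positive part from the negative part, giving length $\sup_{\Sigma^{*}}(\beta) - \inf_{\Sigma^{*}}(\beta)$, which equals the right-hand side in this case.

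The main obstacle will be establishing the \emph{lower bound} on $l_{\Sigma^{*}}(\beta)$, that is, showing that no shorter factorization into elements of $[1,\delta]$ exists. The upper bound follows constructively from the normal form, but the lower bound requires an invariant that decreases by at most one under multiplication by a single dual-simple element or its inverse. The natural candidates are $\sup_{\Sigma^{*}}$ and $\inf_{\Sigma^{*}}$ themselves: one shows that for any dual-simple $s \in [1,\delta]$, multiplication by $s^{\pm 1}$ changes $\sup_{\Sigma^{*}}$ by at most one and $\inf_{\Sigma^{*}}$ by at most one. Granting this, if $\beta = g_{1}\cdots g_{\ell}$ with each $g_{i}\in[1,\delta]^{\pm 1}$ then starting from the identity and multiplying successively, the supremum can rise from $0$ to at most $\sup_{\Sigma^{*}}(\beta)$ and the infimum can fall from $0$ to at least $\inf_{\Sigma^{*}}(\beta)$, so the number of steps $\ell$ is at least $\max\{0,\sup_{\Sigma^{*}}(\beta)\} - \min\{0,\inf_{\Sigma^{*}}(\beta)\}$. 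The delicate point is the Lipschitz-type estimate that $\sup$ and $\inf$ vary by at most one under multiplication by a simple element; this is a standard but nontrivial fact in Garside theory, and I would either cite the relevant property of the lattice structure of $[1,\delta]$ or derive it from the fact that $s \preccurlyeq_{\Sigma^{*}} \delta$ implies $\beta s \preccurlyeq_{\Sigma^{*}} \beta\delta$, so $\sup_{\Sigma^{*}}(\beta s) \leqslant \sup_{\Sigma^{*}}(\beta)+1$, together with the dual statement for the infimum obtained by considering inverses.
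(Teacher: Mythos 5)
The paper itself gives no proof of this proposition---it simply cites \cite{bkl}---so your attempt must stand as a self-contained Garside-theoretic argument; its architecture (greedy normal form for the upper bound, a Lipschitz-type estimate for the lower bound) is indeed the standard route. However, your lower bound contains a genuine logical gap. The property you propose to establish---that multiplying by $s^{\pm 1}$, $s \in [1,\delta]$, changes each of $\sup_{\Sigma^*}$ and $\inf_{\Sigma^*}$ by at most one---does \emph{not} imply the conclusion you draw from it. From that property alone, after $\ell$ multiplications starting at the identity you can only conclude $\sup_{\Sigma^*}(\beta)\leqslant \ell$ and $\inf_{\Sigma^*}(\beta)\geqslant -\ell$, i.e.\ $\ell \geqslant \max\bigl\{\max\{0,\sup_{\Sigma^*}(\beta)\},\, -\min\{\inf_{\Sigma^*}(\beta),0\}\bigr\}$: the \emph{maximum} of the two nonnegative terms, whereas the proposition requires their \emph{sum}. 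In the mixed case $\inf_{\Sigma^*}(\beta)<0<\sup_{\Sigma^*}(\beta)$ your bound is therefore off by up to a factor of two; nothing you state rules out a single generator simultaneously raising the supremum and lowering the infimum. The missing ingredient is one-sided monotonicity: for $s\in[1,\delta]$, right multiplication by $s$ never decreases $\inf_{\Sigma^*}$ (if $\delta^{-m}\beta\in B_{n}^{+*}$ then $\delta^{-m}\beta s\in B_{n}^{+*}$), and right multiplication by $s^{-1}$ never increases $\sup_{\Sigma^*}$ (if $\beta^{-1}\delta^{M}\in B_{n}^{+*}$ then $s\beta^{-1}\delta^{M}\in B_{n}^{+*}$). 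Granting these, each generator moves at most one of the two terms of $\Phi(\beta):=\max\{0,\sup_{\Sigma^*}(\beta)\}-\min\{\inf_{\Sigma^*}(\beta),0\}$ in the unfavourable direction, and by at most one, so $\Phi$ is $1$-Lipschitz for the word metric and $l_{\Sigma^*}(\beta)\geqslant\Phi(\beta)$ follows by induction from $\Phi(1)=0$.

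A second, smaller soft spot---which you flag yourself---is the mixed-case upper bound: asserting that the optimal factorization ``separates the positive part from the negative part'' is not a proof, and the naive factorization $\beta=(\delta^{-p})^{-1}(s_1\cdots s_r)$ gives the too-large bound $-p+r$ that you correctly reject. The standard repair is to absorb each $\delta^{-1}$ into a simple factor: writing $q=-\inf_{\Sigma^*}(\beta)>0$ and $r=\sup_{\Sigma^*}(\beta)-\inf_{\Sigma^*}(\beta)$, and using that conjugation $\tau(x)=\delta^{-1}x\delta$ preserves $[1,\delta]$, the normal form $\delta^{-q}s_1\cdots s_r$ can be rewritten as $(\delta^{-1}t_1)\cdots(\delta^{-1}t_q)\,t_{q+1}\cdots t_r$ with every $t_i\in[1,\delta]$, and each $\delta^{-1}t_i=(t_i^{-1}\delta)^{-1}$ is the inverse of an element of $[1,\delta]$ because $t_i\preccurlyeq_{\Sigma^*}\delta$. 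This exhibits $\beta$ as a product of $r$ generators, so $l_{\Sigma^*}(\beta)\leqslant r$, which is what the formula demands in this case. Both repairs use only standard facts about the dual Garside structure; with them inserted, your proof is complete and is essentially the argument of \cite{bkl} that the paper invokes in place of a proof.
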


See \cite{bkl} for proof.

\subsection{The Lawrence-Krammer-Bigelow representation of the braid groups}
We review a definition of the Lawrence-Krammer-Bigelow representation. Our description of the LKB representation is homological and mainly follows Bigelow, but it is slightly modified so as to agree with our conventions. For details, see~\cite{b2}.

Let $C = \{(z_{1},z_{2}) \in D_{n} \times D_{n}\: | \: z_{1} \neq z_{2}\}\slash S_{2}$ be the configuration space of two unordered points of $D_{n}$.
We take base points $d_{1}=\exp(\frac{3}{2}\pi i-\varepsilon)$ and $d_2=\exp(\frac{3}{2}\pi i+\varepsilon)$ in $\partial D_{n}$ so that $d_2$ lies on the right side of $d_{1}$ -- see Figure~\ref{fig:fork}.
We take $\{d_{1},d_{2}\}$ as a base point of $C$.

Let $\langle q,t \rangle$ be the free abelian group of rank two generated $t$ and $q$, and define a homomorphism $\phi:\pi_{1}(C) \rightarrow \langle q,t \rangle$ as follows:
let $\{\gamma,\gamma'\}\co [0,1] \rightarrow C$ a loop representing an element $x \in \pi_{1}(C)$.
We define the number~$a$ as the sum of the winding numbers along each puncture $p_{j}$:
\[ a = -\frac{1}{2 \pi i} \sum_{j=1}^{n} \left(\int_{\gamma}\frac{dz}{z-p_{j}} + \int_{\gamma'}\frac{dz}{z-p_{j}}\right). \]
We define the number~$b$ as twice the winding number of the path $\gamma-\gamma'$ (i.e.\ as twice the relative winding number of the points):
\[ b = -\frac{1}{\pi i}  \int_{\gamma-\gamma'}\frac{dz}{z}\] 
Now we define $\phi(x)= q^{a}t^{b}$.
(Here we add the minus signs since we adapted the convention that the positive winding direction is the clockwise direction.)

Let $\pi:\tC \rightarrow C$ be the covering of $C$ associated to $\textrm{ker}\, \phi$.
Fix the lift of the base point $\{d_{1},d_{2}\}$. By abuse of notation, we use the same symbol $\{d_{1},d_{2}\}$ to represent the base point both in $C$ and $\tC$.
Then $q$ and $t$ are regarded as deck translations and the second homology 
group $H_{2}(\tC;\Z)$ is a $\Z[q^{\pm 1},t^{\pm 1} ]$-module.

Let $Y$ be the $Y$-shaped graph shown in Figure~\ref{fig:fork}, having one distinguished external vertex $r$, two other external vertices $v_{1}$ and $v_{2}$, and one internal vertex~$c$. We orient the edges of $Y$ as shown in Figure \ref{fig:fork}.

\begin{figure}[htbp]
 \begin{center}
 \SetLabels
(0.05*.82)    $v_{1}$\\
(0.24*.82)   $v_{2}$\\
(0.1*0.42)  $c$\\
(0.13*0.06)   $r$\\
(0.41*0.01)   $d_{1}$\\
(0.46*0.01)   $d_{2}$\\
(0.8*0.01)   $d_{1}$\\
(0.85*0.01)   $d_{2}$\\
(0.38*0.5)   $F$\\
(0.38*0.25)   $F'$\\
(0.85*0.65)   $N$\\
\endSetLabels
\strut\AffixLabels{\includegraphics*[scale=0.5, width=100mm]{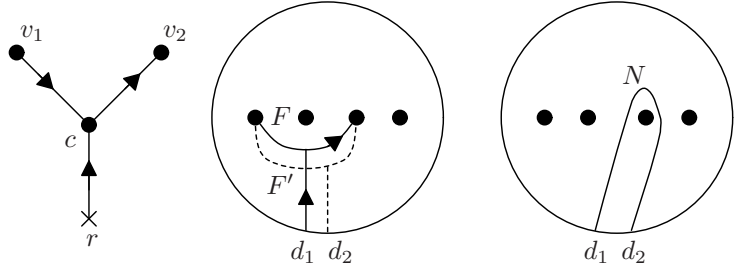}}
   \caption{Fork and Noodle}
 \label{fig:fork}
  \end{center}
\end{figure}

A {\em fork} $F$ is an embedded image of $Y$ into $D^2$ such that:
\begin{itemize}
\item All points of $Y\setminus \{r,v_1,v_2\}$ are mapped to the interior of~$D_n$.
\item The distinguished vertex $r$ is mapped to the base point $d_{1}$.
\item The other two external vertices $v_{1}$ and $v_{2}$ are mapped to two different puncture points.
\item The edge $[r,c]$ and the arc $[v_1,v_2]=[v_{1},c] \cup [c,v_{2}]$ are both mapped smoothly.
\end{itemize}

The image of the edge $[r,c]$ is called the {\em handle} of~$F$. The image of $[v_1,v_2]=[v_{1},c] \cup [c,v_{2}]$, regarded as a single oriented arc, is called the {\em tine}, denoted $T(F)$. The image of $c$ is called the \emph{branch point} of~$F$.
 
A \emph{parallel fork} $F'$ is a fork as depicted by a dotted line in Figure~\ref{fig:fork}: $F'$ is parallel to $F$ and the distinguished vertex is mapped to $d_{2}$. 

A {\em noodle} is an oriented smooth embedded arc which begins at $d_{1}$ and ends at $d_{2}$.

Let $\gamma, \gamma'\co [0,1] \rightarrow D_{n}$ be the handles of the forks~$F$ and of its parallel $F'$, respectively. Let $\{\widetilde{\gamma,\gamma'}\}\co [0,1] \rightarrow \tC$ be the lift of the path $\{\gamma,\gamma'\}\co [0,1] \rightarrow C$ taken so that $\{\widetilde{\gamma,\gamma'}\}(0)=\{d_{1},d_{2}\}$.

Consider the surface $\Sigma(F) =\left\{\{x,y\} \in C \: | \: x \in T(F), y \in T(F')\right\}$ in $C$. Let $\tSigma(F) \subset \tC$ be the component of $\pi^{-1}\Sigma(F)$ which contains the point $\{\widetilde{\gamma,\gamma'}\}(1)$. The surface $\widetilde{\Sigma}(F)$ in $\tC$ defines an element of $H_{2}(\tC;\Z)$. By abuse of notation, we will use the same symbol $F$ to represent the 2nd homology class defined by the surface $\tSigma(F)$.
In a similar way, a noodle $N$ defines a surface $\tSigma(N)$ which defines an element of $H_{2}(\tC,\partial \tC;\Z)$. Again, by abuse of notation we denote the homology class $[\tSigma(N)]$ by $N$.

For $1\leqslant i< j \leqslant n$, let us take a fork $F_{i,j}$ as shown in Figure \ref{fig:standfork}.
These forks are called {\em standard forks}. We call a standard fork of the form $F=F_{i,i+1}$ a {\em straight fork}. Similarly, let $N_{i}$ be the noodle which encloses the $i$-th puncture point $p_{i}$ as shown in Figure \ref{fig:standfork}. We call such a noodle a {\em standard noodle}.
Bigelow showed that $H_{2}(\tC;\R)$ is a free $\R[q^{\pm 1}, t^{\pm 1}]$-module and $\{F_{i,j}\}$ is a basis of $H_{2}(\tC;\R)$. From now on, by using this basis we always identify $H_{2}(\tC;\R)$ with $\R[q^{\pm 1},t^{\pm 1}]^{n(n-1)/2}$.
\begin{figure}[htbp]
 \begin{center}
 \SetLabels
(0.11*0.6)    $p_{i}$\\
(0.26*0.21)    $F_{i,j}$\\
(0.31*0.6)    $p_{j}$\\
(0.75*0.4)    $p_{i}$\\
(0.77*0.65)    $N_{i}$\\
\endSetLabels
\strut\AffixLabels{\includegraphics*[scale=0.5, width=72mm]{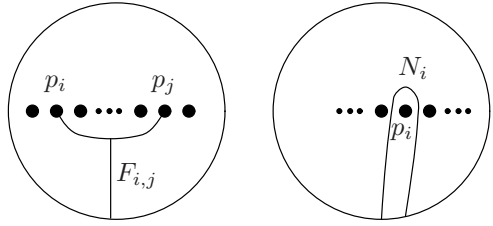}}
   \caption{Standard forks and standard noodles}
 \label{fig:standfork}
  \end{center}
\end{figure}

The braid group $B_{n}$ acts on the covering $\tC$ so that it commutes with the deck translations $t$ and $q$. Hence we get a representation $B_{n} \rightarrow \GL(H_{2}(\tC;\R)) = \GL(\frac{n(n-1)}{2}; \R[q^{\pm 1},t^{\pm 1}])$. The exact matrices are given in the Introduction. This is the {\em Lawrence-Krammer-Bigelow (LKB)} representation.

\begin{remark}
As shown in \cite{PaPa}, the standard forks do \emph{not} form a basis of the $\Z[q^{\pm 1},t^{\pm 1}]$-module $H_{2}(\tC;\Z)$. 
Thus in the above description it was important to use real coefficients, even if the matrix coefficients actually all lie in $\Z[q^{\pm 1},t^{\pm 1}]$.
\end{remark}

\subsection{Noodle-fork pairings}
\label{sec:pairing}
The Noodle-Fork pairing is a homology intersection pairing $H_{2}(\tC;\partial \tC) \times H_{2}(\tC) \rightarrow \Z[q^{\pm 1},t^{\pm 1}]$.
As Bigelow showed in his so-called Basic Lemma (\cite[Lemma 2.3]{b2}), the pairing is calculated as follows.

Let $z_{1},\ldots, z_{m}$ be the intersection points of $T(F)$ with $N$, and let $z'_{i}$ be the intersection of $T(F')$ with $N$ which corresponds to $z_{i}$. 

Observe that a pair of intersection points $\{z_{i},z_{j}'\} \in \tC$ corresponds to an intersection point of the surfaces $\widetilde{\Sigma}(N)$ and $\widetilde{\Sigma}(F)$. Hence it contributes to the total intersection pairing of $N$ and $F$ as a monomial $\varepsilon_{i,j}m_{i,j}= \varepsilon_{i,j}q^{a_{i,j}}t^{b_{i,j}}$ where $\varepsilon_{i,j}$ denotes the sign of the intersection at $\{z_{i},z'_{j}\}$.

The monomial $\varepsilon_{i,j}m_{i,j}$ is computed as follows.
First we define $c_{i,j}$ by 
\[
c_{i,j}=
\left\{
\begin{array}{ll}
+1 & \textrm{ if } d_{2} \textrm{ and } z'_{j} \textrm{ belong to the same component of } N-z_{i}.\\
-1 & \textrm{ if } d_{1} \textrm{ and } z'_{j} \textrm{ belong to the same component of } N-z_{i}.
\end{array}
\right.
\]  

Take three paths $A, B$ and $C$ in $D_{n}$ as follows:
\begin{itemize}
\item $A$ is a path from $d_{1}$ to the branch point of~$F$ along the handle of~$F$.
\item $B$ is a path from the branch point to $z_{i}$ along the tine $T(F)$.
\item $C$ is a path from $z_{i}$ to $d_{k}$ along the noodle $N$. Here $k=1$ if $c_{i,j}=+1$ and $k=2$ if $c_{i,j}=-1$. In other words, $C$ goes along $N$ starting from $z_{i}$, choosing the direction so as to avoid $z'_{j}$.
\end{itemize}

Similarly, we take three paths $A', B'$ and $C'$ by

\begin{itemize}
\item $A'$ is a path from $d'_{1}$ to the branch point of~$F'$ along the handle of~$F'$.
\item $B'$ is a path from the branch point to $z'_{j}$ along the tine $T(F')$.
\item $C'$ is a path from $z'_{j}$ to $d_{k'}$ along the noodle $N$. Here $k'=2$ if $c_{i,j}=+1$ and $k'=1$ if $c_{i,j}=-1$.
\end{itemize}

Now the concatenation of the three paths $\{C,C'\}\{B,B'\}\{A,A'\}$ defines a loop~$l$ in $C$. The monomial $m_{i,j}$ is given by $\phi(l)$.
The sign of the intersection $\varepsilon_{i,j}$ is given by
\[ \varepsilon_{i,j} = - s s' c_{i,j}\]
where $s$ (respectively $s'$) is the sign of the intersection of $N$ and $T(F)$ (respectively $T(F')$) at $z_{i}$ (respectively $z'_{j}$).

In summary, the noodle-fork pairing is given by the following sum, where we recall that~$m$ denotes the number of intersections of $T(F)$ with~$N$:
\[ \langle N,F\rangle = \sum_{1\leqslant i,j\leqslant m} \varepsilon_{i,j}m_{i,j} \in \Z[q^{\pm 1}, t^{\pm 1}]. \]

By direct computations we observe the following, which will play an important role in the proof of main theorems.

\begin{figure}[htbp]
 \begin{center}
\SetLabels
(0.08*0.93) $z_{1}$\\
(0.07*0.81)  $z'_{1}$\\
(0.2*0.93) $z_{2}$\\
(0.21*0.81)  $z'_{2}$\\
(0.6*0.85)  $\langle N,F\rangle =qt+ 1-t-q^{-1}$\\
(0.02*0.61)  $\{z_{1},z'_{1}\}$\\
(0.4*0.47) $ { \left\{ \begin{array}{l} a_{1,1}=1 \\ b_{1,1}=1 \\ \varepsilon_{1,1} = 1 \end{array} \right. }$\\
(0.02*0.29)  $\{z_{2},z'_{1}\}$\\
(0.4*0.18) $ { \left\{ \begin{array}{l} a_{2,1}=0 \\ b_{2,1}=1 \\ \varepsilon_{2,1} = -1 \end{array} \right. }$\\
(0.55*0.61)  $\{z_{1},z'_{2}\}$\\
(0.92*0.47) $ { \left\{ \begin{array}{l} a_{1,2}=0 \\ b_{1,2}=0\\ \varepsilon_{1,2} = 1 \end{array} \right. }$\\
(0.55*0.29)  $\{z_{2},z'_{2}\}$\\
(0.92*0.18) $ { \left\{ \begin{array}{l} a_{2,2}= -1 \\ b_{2,2}=0 \\ \varepsilon_{2,2} = -1 \end{array} \right. }$\\
  \endSetLabels
\strut\AffixLabels{\includegraphics*[scale=0.5, width=100mm]{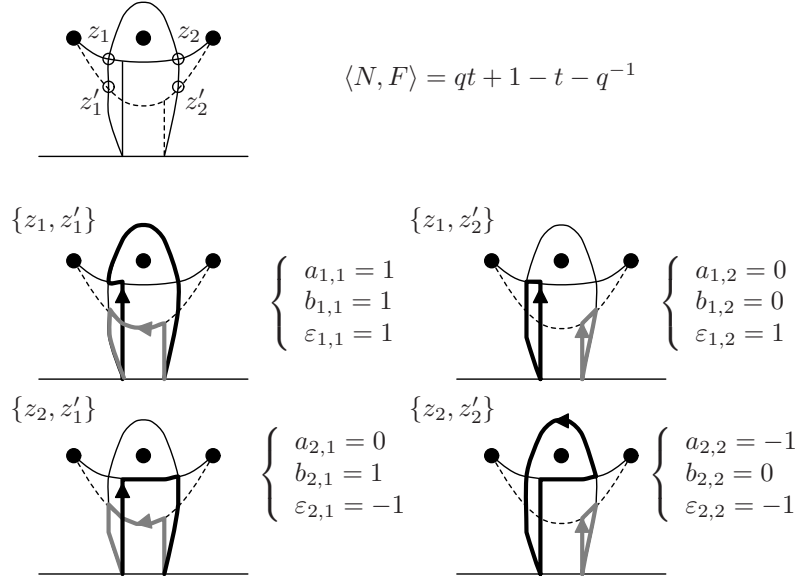}}
 \caption{Calculation of the noodle-fork pairing: an example}
\label{fig:example}
  \end{center}
\end{figure}

\begin{lem}
\label{lem:standpair}
Let $N$ be a standard noodle and let $F$ be a standard fork. Then  
\[ M_{q}(\langle N,F \rangle) \leqslant 1,\;\;\;\;m_{q}(\langle N,F \rangle) \geqslant -1. \]
\end{lem}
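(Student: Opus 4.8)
The plan is to reduce the statement to a bound on the individual monomials and then compute those monomials by a direct geometric analysis. Since $\langle N,F\rangle = \sum_{i,j}\varepsilon_{i,j}q^{a_{i,j}}t^{b_{i,j}}$, we have $M_{q}(\langle N,F\rangle)\leqslant \max_{i,j}a_{i,j}$ and $m_{q}(\langle N,F\rangle)\geqslant \min_{i,j}a_{i,j}$, so it suffices to prove that $-1\leqslant a_{i,j}\leqslant 1$ for every pair of intersection points $z_{i}\in T(F)\cap N$ and $z'_{j}\in T(F')\cap N$. Recall that $a_{i,j}$ is, by the definition of $\phi$, the total winding number about all the punctures of the loop $l=\{C,C'\}\{B,B'\}\{A,A'\}$, that is, the sum over $p_{1},\dots,p_{n}$ of the winding numbers of its two strands.

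First I would reduce to finitely many local pictures. Writing $N=N_{k}$ and $F=F_{i,j}$, the intersection points of the tine with the noodle depend only on the position of $k$ relative to $i$ and $j$. When $k<i$ or $k>j$ the noodle lies to one side of the tine and there are no intersections, so $\langle N,F\rangle=0$ and the inequalities hold vacuously. In the remaining cases $k\in\{i,\dots,j\}$ the tine meets the noodle in a bounded number of points (two in the model computation of Figure~\ref{fig:example}), and I would record these intersections, together with the corresponding loops $l_{i,j}$, explicitly from the standard pictures of Figure~\ref{fig:standfork}.

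Next I would compute each loop $l_{i,j}$ and read off its total winding number $a_{i,j}$. The essential geometric input is that standard forks and standard noodles are \emph{simple}: the handle $A$ of a standard fork runs from $d_{1}$ up to the branch point without encircling any puncture, and the segments $B,B'$ of the tines and $C,C'$ of the noodle are embedded arcs. Consequently each loop $l_{i,j}$ is homotopic into a neighbourhood enclosing at most one puncture, traversed at most once, which forces $a_{i,j}\in\{-1,0,1\}$; this matches Figure~\ref{fig:example}, where the four exponents are $1,0,0,-1$.

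The step I expect to be the main obstacle is the careful bookkeeping of the two strands at once. A priori each strand could wind nontrivially and their contributions might add to something of absolute value larger than $1$. The point to exploit is that $F'$ is parallel to $F$ and both forks are paired against the \emph{same} noodle $N$: away from the two chosen intersection points $z_{i}$ and $z'_{j}$ the two strands track one another, so the combined loop $l_{i,j}$ encloses at most the single puncture surrounded by $N$. Verifying in each of the cases $k=i$, $i<k<j$ and $k=j$ that this caps the total winding at $\pm 1$ is the technical heart of the argument and completes the proof.
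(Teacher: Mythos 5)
Your proposal is correct and takes essentially the same route as the paper: the paper's proof is likewise a direct case-by-case computation of Bigelow's pairing recipe for the finitely many relative positions of a standard noodle $N_k$ and standard fork $F_{i,j}$, carried out explicitly for the hardest case $i<k<j$ (yielding $\langle N,F\rangle = qt+1-t-q^{-1}$, with all $q$-exponents in $\{-1,0,1\}$) and declared similar in the remaining cases. One small caution for when you carry out your ``technical heart'': the two strands do \emph{not} track one another along the noodle --- the segments $C$ and $C'$ run to \emph{opposite} ends of $N$, and this is precisely why the diagonal pairs $\{z_i,z_i'\}$ contribute \emph{odd} exponents $\pm 1$ (a literal tracking argument would force even exponents), so the case-by-case verification you defer to is genuinely where the correct values come from, exactly as in the paper's Figure~\ref{fig:example}.
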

\begin{proof}
Here we give the calculation of the paring $\langle N,F\rangle$, for the most complicated case, the other cases are treated similarly.
Let $F= F_{i,j}$ and $N=N_{k}$ be a standard fork and noodle and assume that $i<k<j$. Then $T(F)$ and $N$ intersect at two points, $z_{1}$ and $z_{2}$ hence the two surfaces $\tSigma(N)$ and $\tSigma(F)$ intersects at four points. Now $a_{i,j},b_{i,j}$ and $\varepsilon_{i,j}$ are calculated as shown in Figure \ref{fig:example}. The paths $CBA$ and $C'B'A'$ are depicted by a black and gray line, respectively. Thus, we conclude $\langle N, F \rangle = qt+1-t-q^{-1}$. \end{proof}

\section{The wall crossing labeling of curve diagrams}

In this section we introduce the wall crossing labeling on curve diagrams and show that this labeling reflects the dual Garside length of braids. This result is interesting in its own right.

\subsection{Curve diagrams}

Let $\overline{E}$ be the diagram in $D_{n}$ consisting of the real line segment between the point $-1$ (the leftmost point of $\partial D^{2}$) and~$p_{n}$ (the rightmost puncture). Similarly, let~$E$ be the diagram in $D_{n}$ consisting of the real line segment between $p_{1}$ (the leftmost puncture) and $p_{n}$ (the rightmost puncture).
Both line segments $\overline{E}$ and $E$ are oriented from left to right.
Let $W_{i}$ be a vertical line segment in $D_{n}$, oriented upwards, which connects the puncture $p_{i}$ and the boundary of $D_{n}$ in the upper half-disk $\{z \in D^{2} \: | \: \textrm{Im}\, z>0 \: \}$ -- see Figure~\ref{fig:curvediagram}. The lines $W_i$ are called the {\em walls}, and their union $\bigcup W_{i}$ is denoted $W$.

\begin{figure}[htbp]
 \begin{center}
 \SetLabels
(0.12*0.65)    $\overline{E}$ and $E$\\% including the dotted line)\\
(0.43*0.7)    $W$\\
(0.88*0.35)    $\sigma_{1}(\overline{E})$\\
\endSetLabels
\strut\AffixLabels{\includegraphics*[scale=0.5, width=110mm]{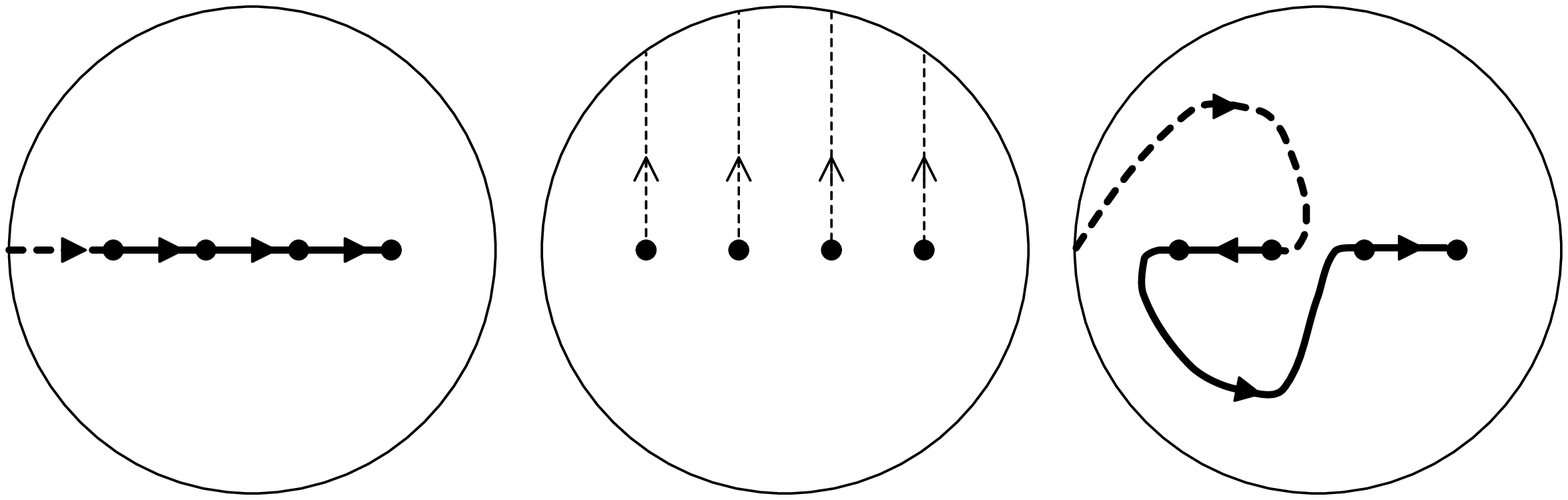}}
   \caption{Curve diagram and walls}
 \label{fig:curvediagram}
  \end{center}
\end{figure}

For $\beta \in B_{n}$, the {\em total curve diagram} and the {\em curve diagram} of $\beta$ is the image of the diagrams $\overline{E}$ and $E$, respectively, under a diffeomorphism $\phi$ representing $\beta$ which satisfies the following conditions.

\begin{enumerate}
\item The number of intersections of $\phi(\overline{E})$ with the walls is as small as possible within the diffeotopy class of~$\phi$.
\item Near the puncture points, the image $\phi(\overline{E})$ coincides with the real line.
\end{enumerate}

We denote the curve diagram of $\beta$ by $D_{\beta}$ and the total curve diagram by $\overline{D_\beta}$. See the right side of Figure \ref{fig:curvediagram} for an example.

To introduce the wall-crossing labeling and make the correspondence between fork and curve diagram explicit, we use a modified version of the curve diagrams.
For each puncture $p_{i}$ other than $\beta(p_{n})$, we take a small disc neighborhood of $p_{i}$, say~$B_{i}$, and let $B = \cup_{i}B_{i}$. Around each puncture $p_{i}$, we modify the curve diagram $D_{\beta}$ as shown in Figure \ref{fig:modcurve}. 
We denote the resulting (total) curve diagram by $MD_{\beta}$ $(\overline{MD_{\beta}})$, and call it the {\em (total) modified curve diagram}.
The right side of Figure \ref{fig:modcurve} shows the total modified curve diagram of $\sigma_{1}$.
\begin{figure}[htbp]
 \begin{center}
 \SetLabels
(0.13*0.72)    $p_{i}$\\
(0.2*0.8)    $B_{i}$\\
(0.13*0.27)    $p_{i}$\\
(0.2*0.34)    $B_{i}$\\
(0.86*0.33)    $\overline{MD_{\sigma_{1}}}$\\
\endSetLabels
\strut\AffixLabels{\includegraphics*[scale=0.5, width=110mm]{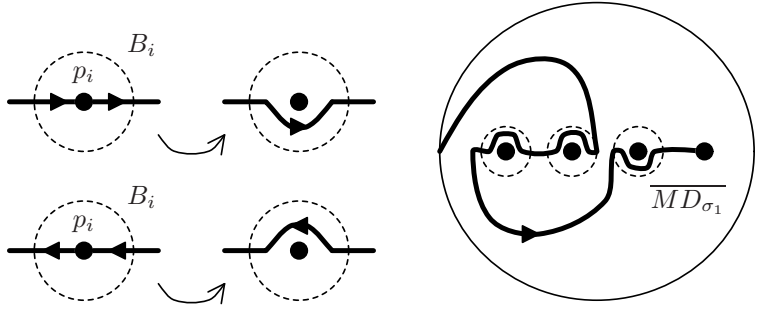}}
   \caption{Modified curve diagrams}
 \label{fig:modcurve}
  \end{center}
\end{figure}

Take a smooth parametrization of $\overline{MD_\beta}$, viewed as an image of the function $\gamma\co [0,1] \rightarrow D^{n}$. 
Then we define the {\em wall crossing labeling} as follows.

\begin{defn}
Let $R$ be the set of intersection points $\overline{MD_{\beta}} \cap W$. For each connected component $\alpha$ of $\overline{MD_{\beta}}-(R \cup (\overline{MD_{\beta}} \cap B))$, we assign the algebraic intersection number of $W$ and the arc $\gamma([0,s])$, where $s \in [0,1]$ is taken so that $\gamma(s) \in \alpha$. We call this integer-valued labeling $\Wcr(\alpha)$ the {\em wall crossing labeling}.
\end{defn}

An {\em arc segment} of the curve diagram $D_{\beta}$ (or the total curve diagram $\overline{D_{\beta}}$) is a component of $D_{\beta}-(D_{\beta}\cap (W \cup B))$ (or of $\overline{D_{\beta}}-(\overline{D_{\beta}}\cap (W \cup B))$, respectively). Since $\overline{MD_{\beta}}$ and $\overline{D_{\beta}}$ coincide except on $B$, an arc segment is identified with the subarc of $\overline{D_{\beta}}$. Using this correspondence, we assign the wall crossing labeling for each arc segment of the curve diagram -- see Figure~\ref{fig:labelex}.

\begin{figure}[htb]
\centerline{\input{labelex3.pstex_t}}
\caption{The total curve diagram of the braid $\beta=(\sigma_2^{-1}\sigma_1)^2$, and its wall crossing labeling. Among the labels of the solid (not dashed) arcs, the smallest one is $-2$ and the largest one is $2$.}
\label{fig:labelex}
\end{figure}

\begin{defn}\label{def:wcrlabel}
For a braid $\beta$, we define $\LWcr(\beta)$ and $\SWcr(\beta)$ as the largest and the smallest wall crossing number labelings occurring in the curve diagram $D_{\beta}$.
\end{defn}

Notice that in Definition~\ref{def:wcrlabel} we used the largest and smallest labels only of the curve diagram $D_{\beta}$, not the total curve digram $\overline{D_{\beta}}$. However, in order to determine the wall crossing labelings we need to consider the total curve diagram.

We now show that the dual Garside (Birman-Ko-Lee) length of a braid can be read off the wall crossing labeling of its curve diagram.

\begin{thm}
\label{thm:wall}
For a braid $\beta \in B_{n}$ we have the following equalities: 
\begin{enumerate}
\item $\sup_{\Sigma^{*}}(\beta) = \LWcr(\beta)$.
\item $\inf_{\Sigma^{*}}(\beta) = \SWcr(\beta)$.
\item $l_{\Sigma^{*}}(\beta) = \max( \LWcr(\beta),0) - \min( \SWcr(\beta),0 )$.
\end{enumerate}
\end{thm}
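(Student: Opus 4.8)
The plan is to obtain part~(3) at once from parts~(1) and~(2): substituting the equalities $\sup_{\Sigma^*}(\beta)=\LWcr(\beta)$ and $\inf_{\Sigma^*}(\beta)=\SWcr(\beta)$ into the length formula of Proposition~\ref{prop:dGlength} reproduces the claimed expression for $l_{\Sigma^*}(\beta)$ verbatim. So I would focus exclusively on~(1) and~(2). Unwinding the definitions of $\sup_{\Sigma^*}$ and $\inf_{\Sigma^*}$, these amount to the two families of equivalences
\[ \beta\preccurlyeq_{\Sigma^*}\delta^{m} \iff \LWcr(\beta)\leqslant m, \qquad \delta^{M}\preccurlyeq_{\Sigma^*}\beta \iff \SWcr(\beta)\geqslant M, \]
to be proved for every $m,M\in\Z$: taking the least admissible $m$ and the greatest admissible $M$ then gives~(1) and~(2). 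Since $\preccurlyeq_{\Sigma^*}$ is invariant under left multiplication, the whole theorem is thereby reduced to comparing the subword order against powers of $\delta$ with the wall crossing labeling.

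The geometric input is that the label of an arc segment is the net signed count of crossings with the wall system $W$ accumulated from the start of the (total) curve diagram, and that every wall contributes equally, so that the labeling is a running tally of total winding around the punctures; recall also that $D_n\setminus W$ is simply connected. The first lemma I would prove is that left multiplication by the dual Garside element acts as a uniform shift, $\LWcr(\delta\beta)=\LWcr(\beta)+1$ and $\SWcr(\delta\beta)=\SWcr(\beta)+1$, exactly paralleling the elementary identities $\sup_{\Sigma^*}(\delta\beta)=\sup_{\Sigma^*}(\beta)+1$ and $\inf_{\Sigma^*}(\delta\beta)=\inf_{\Sigma^*}(\beta)+1$. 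Replacing $\beta$ by $\delta^{-m}\beta$ (resp.\ $\delta^{-M}\beta$) and invoking left invariance then collapses both families of equivalences to their single bottom case: $\beta\in B_n^{+*}$ if and only if $\SWcr(\beta)\geqslant 0$, and dually $\beta^{-1}\in B_n^{+*}$ if and only if $\LWcr(\beta)\leqslant 0$. An orientation reversing symmetry of $D_n$ exchanging these two statements should let me treat only one of them.

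To establish that a braid is dual-positive precisely when no arc of its curve diagram carries a negative label, I would induct on the dual Garside length, constructing a dual-positive braid one band generator $a_{i,j}$ at a time and tracking the effect of right multiplication by $a_{i,j}$ on the tine of the associated fork and hence on the labels. The aim is to show that appending a positive band generator never creates a negative label, while the lattice structure of the interval $[1,\delta]$ ensures the converse: if $\beta$ fails to be dual-positive, a negatively labeled arc segment must appear. Combining sharpness in the positive range with the $\delta$-shift yields the two families of equivalences for all $m$ and $M$, and hence~(1) and~(2).

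The main obstacle is the \emph{exact} matching, not the one-sided bounds. Because the labeling is a global running count, a modification of the curve diagram that is localized near two punctures nevertheless shifts the labels along an entire subarc, so the propagation of labels must be controlled globally rather than locally. The truly delicate point is realization: one must exhibit an arc segment attaining the extreme label $\sup_{\Sigma^*}(\beta)$ (resp.\ $\inf_{\Sigma^*}(\beta)$), not merely bound all labels by it. I expect this to hinge on the defining minimality of the curve diagram --- that it meets the walls in the fewest possible points --- which should forbid the cancellations of wall crossings that could otherwise mask the true extreme winding, thereby forcing $\LWcr$ and $\SWcr$ to record $\sup_{\Sigma^*}$ and $\inf_{\Sigma^*}$ on the nose.
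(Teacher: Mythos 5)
Your reduction framework is sound, and it is close in spirit to the paper's: the shift identities $\LWcr(\delta\beta)=\LWcr(\beta)+1$ and $\SWcr(\delta\beta)=\SWcr(\beta)+1$ (which the paper proves by observing that $\delta$ acts on the circularly normalized diagram as a rigid $\frac{2\pi}{n}$-rotation), together with left-invariance of $\preccurlyeq_{\Sigma^*}$, do reduce (1) and (2) to the two bottom cases ``$\gamma \in B_n^{+*} \iff \SWcr(\gamma)\geqslant 0$'' and ``$\gamma^{-1} \in B_n^{+*} \iff \LWcr(\gamma)\leqslant 0$'', and (3) then follows from Proposition~\ref{prop:dGlength} exactly as you say. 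One direction of each bottom case is also within reach by your method: describing how a dual-simple element (or band generator) acts on a labeled diagram --- unchanged outside a neighborhood of the associated polygons, labels increased by one inside, interpolation on annuli in between --- shows that left multiplication by a dual-positive braid never decreases any label, whence $\gamma \in B_n^{+*} \Rightarrow \SWcr(\gamma)\geqslant 0$. (Two slips: with the paper's conventions braids act on the \emph{left}, so your induction must prepend generators on the left rather than ``right multiplication''; and forks and tines play no role in this part of the argument.)

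The converse, however, is the entire content of the theorem, and it is missing. The sentence ``the lattice structure of the interval $[1,\delta]$ ensures the converse: if $\beta$ fails to be dual-positive, a negatively labeled arc segment must appear'' is precisely the statement to be proved, and neither the lattice structure nor the minimality of the curve diagram with respect to the walls ``ensures'' it. What is required is an explicit peeling procedure plus a proof that it works, and this is what the paper's Lemma~\ref{lemma:shortproduct} supplies: given a diagram with $0\leqslant \SWcr(\beta)<\LWcr(\beta)$, one takes the arcs carrying the \emph{maximal} label, lets $I$ be the set of pairs of walls they connect, and lets $P$ be the minimal collection of disjoint convex polygons containing the chords $p_ip_j$ for $(i,j)\in I$; the lattice structure enters only here, in forming $\beta^-=(\bigvee_{(i,j)\in I}a_{i,j})^{-1}$. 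One must then prove two things: that the $\beta^-$-action lowers the maximal label by exactly one, and --- the delicate claim --- that it does \emph{not} lower the minimal label, so that the induction can continue. The latter is a genuine geometric argument: minimally labeled arcs are S-shaped while maximally labeled arcs are \reflectbox{S}-shaped; a minimally labeled arc cannot enter the interior of any polygon of $P$ (proved by a two-coloring separation argument that exploits how $P$ was built from the maximal arcs); and arcs terminating in vertices of $P$ need a separate case analysis. Nothing in your proposal plays the role of this construction or this claim; ``minimality forbids cancellations'' does not address the actual danger, which is not bigons between the diagram and the walls but the possibility that the particular dual-simple element you peel off creates new, smaller labels elsewhere. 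Finally, your plan to deduce the second bottom case from the first via an orientation-reversing symmetry of $D_n$ needs care: reflection of the linear picture sends the band generator $a_{i,j}$ with $j>i+1$ to the inverse of a half-twist along an \emph{upper} arc, which is not the inverse of any element of $\Sigma^*$, so you would have to pass to the circular normalization with a reflection-symmetric puncture configuration (or simply argue ``similarly'', as the paper does) for that step to be legitimate.
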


\begin{exam}
Let us consider the braid $\beta=(\sigma_2^{-1}\sigma_1)^2=(a_{2,3}^{-1}a_{1,2})^2$ -- see Figure~\ref{fig:labelex}. Any word representing this braid with letters belonging to $\{a_{1,2}^{\pm 1}, a_{2,3}^{\pm 1}, a_{1,3}^{\pm 1},\delta^{\pm 1}\}$ has at least two negative letters, so $\inf_{\Sigma^*}(\beta)=-2$. Similarly, any word representing $\beta$ using these letters has at least two positive letters, so $\sup_{\Sigma^*}(\beta)=2$. (Indeed, the dual normal form of $\beta$ is $a_{1,2}^{-1}.a_{1,3}^{-1}.a_{1,2}.a_{1,2}$.)

Now Theorem~\ref{thm:wall} asserts that the smallest and largest wall crossing labelings occurring in the curve diagram of $\beta$ should be $-2$ and~$2$, respectively, and Figure~\ref{fig:labelex} shows that this indeed the case.
\end{exam}

\begin{proof}[Proof of Theorem \ref{thm:wall}]

First of all, we show that for a dual-positive braid $\beta$ the equality $\LWcr(\beta)=l_{\Sigma^{*}}(\beta)$ holds.

To treat the dual Garside structure, we temporarily isotope our curve diagram and walls so that the all punctures sit on the circle $|z|=\frac{1}{2}$ and walls are disjoint from the subdisc $|z| \leqslant\frac{1}{2} $ as shown on the left side of Figure~\ref{fig:dualbraid}. This isotopy does not affect the wall crossing labelings, since the wall crossing labeling is defined by using algebraic intersection of arcs and walls.

As shown in \cite{bkl}, the set $[1,\delta]$ of dual-simple braids is in bijection with the set of disjoint collections of convex polygons in~$D_{n}$ whose vertices are punctures. This bijection is given as follows: to any such collection of polygons we can associate a dance of the puncture points, moving each puncture which belongs to some polygon~$P$ in the clockwise direction along the boundary of~$P$, to the position of the adjacent vertex. In this way, each dual-simple element can be represented by some disjoint convex polygons in $D_{n}$ -- see Figure~\ref{fig:dualbraid}.

We remark that polygons may be degenerate, having only two vertices; in the associated braids, the two corresponding punctures are interchanged by a clockwise half Dehn-twist. 

\begin{figure}[htbp]
 \begin{center}
 \SetLabels
(0.135*0.82)    $p_{1}$\\
(0.2*0.66)    $p_{2}$\\
(0.115*0.63)    $p_{n}$\\
(0.67*0.01)    $P$\\
(0.45*0.6) $P_1$\\
(0.55*0.42) $P_2$\\
\endSetLabels
\strut\AffixLabels{\includegraphics*[scale=0.5, width=110mm]{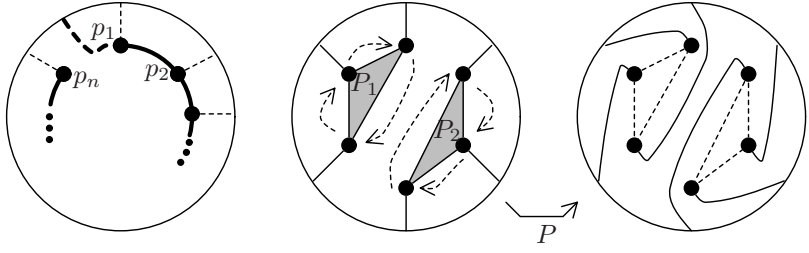}}
   \caption{The left picture shows the basic curve diagram (bold lines) and the walls (thin dashed lines). The pictures in the middle and on the left illustrate the action of a dual-simple braid~$P$ given by two disjoint polygons~$P_1$ and~$P_2$.}
 \label{fig:dualbraid}
  \end{center}
\end{figure}

Given a curve diagram $D_\beta$, equipped with the wall crossing labeling, and given a collection $P=\{P_1,\ldots,P_k\}$ of disjoint polygons with vertices in the punctures representing some dual-simple braid~$\beta^+$, let us describe in detail how to obtain the curve diagram $D_{\beta^+\beta}$ and its wall crossing labeling.
Consider the disjoint collection of annuli $A_1,\ldots,A_k$ in $D_n$ as follows (see Figure~\ref{fig:Paction}):
\begin{enumerate}
\item\label{descr:nopunctures} The outer and inner boundary component of $A_i$ are both homotopic in~$D_n$ to the boundary of a regular neighborhood of~$P_i$.
\item\label{descr:minpos} The boundary components of $A_i$ are in reduced relative position (no bigons) with respect to the walls $W$ and also with respect to the curve diagram~$D_\beta$.
\item  Among all collections of annuli satisfying (\ref{descr:nopunctures}) and  (\ref{descr:minpos}), we choose the one where the annuli contain as many intersection points $D_\beta\cap W$ as possible. Roughly speaking, we push as much twisting of the curve diagram around the polygons as possible into the annuli.
\end{enumerate}
Let us denote the components of $D_n\setminus A_i$ containing the polygon $P_i$ by $N(P_i)$. Note that the intersection of $D_\beta$ with $A_i$ is simply a spiral, and that the labels on this spiral interpolate linearly between the labels on the outer and inner boundary component.

Now the $\beta^+$-action on $D_\beta$ is simple to describe: on $D_n\setminus (\cup_{i=1}^{k} (A_i\cup N(P_i)))$, the diagram and its labeling is unchanged. On $N(P_i)$ the diagram is turned one notch in the clockwise sense, and all labels are increased by one. On the annuli, we have some 
twisting, but the labels still just interpolate - see Figure~\ref{fig:Paction}.

\begin{figure}[htbp] 
 \begin{center}
 \SetLabels
(0.43*0.62) $4$\\
(0.33*0.84) $3$\\
(0.2*0.62) $3$\\
(0.16*0.46) $2$\\
(0.3*0.23) $1$\\
(0.35*0.69) $0$\\
(0.4*0.59) $1$\\
(0.2*0.4) $P_i$\\
(0.07*0.58) $A_i$\\
(0.973*0.62) $4$\\
(0.89*0.84) $3$\\
(0.78*0.46) $4$\\
(0.71*0.50) $3$\\
(0.69*0.16) $2$\\
(0.84*0.23) $1$\\
(0.88*0.70) $0$\\
(0.95*0.59) $1$\\
(0.5*0.65) $\beta^+$\\
\endSetLabels
\strut\AffixLabels{\includegraphics*[scale=0.5, width=110mm]{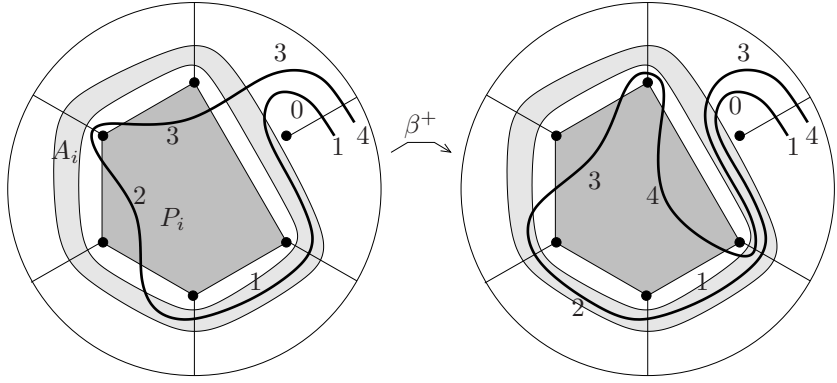}}
   \caption{Action of dual-simple elements}\label{fig:Paction}
  \end{center}
\end{figure}

The action of a \emph{negative} dual-simple braid~$\beta^-$ is similar, the only difference being that the twisting is in the counterclockwise direction and labels are decreased by one.

From the above description
for a dual-positive braid $\beta$ and a dual-simple element~$\beta^{+}$, we have the inequality $\LWcr(\beta^{+}\beta) \leqslant \LWcr(\beta) +1$. This implies the inequality 
\[ \LWcr(\beta) \leqslant l_{\Sigma^{*}}(\beta) \]
The converse inequality is now implied by the following lemma:

\begin{lemma}\label{lemma:shortproduct} The dual-positive braid~$\beta$ can be written as the product of $\LWcr(\beta)$ positive dual-simple braids.\end{lemma}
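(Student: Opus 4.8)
The plan is to argue by induction on $m=\LWcr(\beta)$, peeling off one dual-simple factor at a time, always removing the \emph{outermost layer} of the labelled curve diagram. The heart of the argument is to show that, when $m\geqslant 1$, there is a dual-simple element $s\preccurlyeq_{\Sigma^{*}}\beta$ for which $\LWcr(s^{-1}\beta)=m-1$; applying the inductive hypothesis to the dual-positive braid $s^{-1}\beta$ then writes it as a product of $m-1$ dual-simple braids, and prepending $s$ yields a product of $m$ such braids for $\beta$.

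For the base case $m=0$ I would first record that a dual-positive braid has only non-negative wall crossing labels. Indeed, starting from the trivial braid, whose curve diagram is $E$ with all labels $0$, and building $\beta$ by successively applying positive dual-simple elements, the description of the action recalled above shows that the \emph{minimal} label never decreases: inside each $N(P_i)$ the labels go up by one, outside they are unchanged, and on the annuli $A_i$ they interpolate between the two boundary values. Hence $\SWcr(\beta)\geqslant 0$, so if in addition $\LWcr(\beta)=0$ then every label equals $0$; by tautness $D_\beta$ then crosses no wall and coincides with $E$, forcing $\beta=1$, the empty product of $0$ dual-simple braids.

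For the inductive step, assume $m\geqslant 1$. I would read off, from the arc segments of $\overline{D_\beta}$ carrying the maximal label $m$, a disjoint collection $P=\{P_1,\dots,P_k\}$ of convex polygons with vertices at the punctures: the vertices are the punctures incident to the top-level segments, and the edges record the clockwise cyclic order in which the outermost layer passes from one such puncture to the next. In the isotoped picture (punctures on $|z|=\tfrac12$, walls outside) the embeddedness and tautness of $D_\beta$ should force these polygons to be non-crossing and convex, so that $P$ genuinely represents a dual-simple element $s\in[1,\delta]$. Applying the \emph{negative} action of $s$ then produces the curve diagram of $s^{-1}\beta$; since $P$ is designed to cover exactly the label-$m$ segments and the negative action never raises a label, we obtain $\LWcr(s^{-1}\beta)\leqslant m-1$. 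Exactness is automatic: the already-established inequality gives $m=\LWcr(\beta)=\LWcr\bigl(s\cdot(s^{-1}\beta)\bigr)\leqslant\LWcr(s^{-1}\beta)+1$, whence $\LWcr(s^{-1}\beta)=m-1$.

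The one remaining point, and the step I expect to be the main obstacle, is to verify that $s^{-1}\beta$ is again \emph{dual-positive}, i.e. that $s\preccurlyeq_{\Sigma^{*}}\beta$, which is what allows the inductive hypothesis to be applied. I would prove this geometrically rather than through Theorem~\ref{thm:wall}, whose proof is not yet complete, so as to avoid circularity: the idea is that undoing the outermost layer removes a single clockwise dance that genuinely occurs within a dual-positive word for $\beta$, leaving $s^{-1}\beta$ a product of positive bands. Concretely, the delicate verifications are (i) that the top-level segments really close up into disjoint convex polygons, where the embeddedness of $D_\beta$ together with its dual-positivity must be used with care near the branch structure of the outermost layer, and (ii) that the resulting $s$ is a genuine left-divisor of $\beta$ in the dual monoid; both come down to a local analysis of how the outermost layer enters and leaves the regions $N(P_i)$ and the annuli $A_i$. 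Granting these, the induction closes and $\beta$ is written as a product of exactly $\LWcr(\beta)$ dual-simple braids.
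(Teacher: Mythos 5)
There is a genuine gap, and it is exactly the point you flag as ``the main obstacle'': your induction requires $s\preccurlyeq_{\Sigma^{*}}\beta$, i.e.\ that $s^{-1}\beta$ is again dual-positive, and your proposal never proves this -- it only sketches a hope (``undoing the outermost layer removes a single clockwise dance that genuinely occurs within a dual-positive word for $\beta$''). No mechanism is given for why the polygon collection read off from the curve diagram is a left-divisor of $\beta$ in the dual monoid; indeed, establishing such divisibility directly from the diagram is essentially the content of the lemma itself (a posteriori, $s_1\preccurlyeq_{\Sigma^{*}}\beta$ for the first factor of any shortest dual-simple decomposition), so deferring it leaves the argument circular in practice. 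Since the inductive hypothesis applies only to dual-positive braids, without this step the induction does not close.

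The paper's proof is structured precisely to avoid ever needing this divisibility. It multiplies by the negative dual-simple braid $\beta^{-}$ corresponding to the polygons spanned by the maximally labeled arcs, \emph{without} claiming that $\beta^{-}\beta$ is dual-positive. Instead it tracks the weaker invariant $\SWcr\geqslant 0$ through the induction: the key claim is that the $\beta^{-}$-action, while decreasing all maximal labels by one, never decreases the minimal label. Proving that claim is the real work (minimally labeled arcs are S-shaped, maximally labeled ones are mirror-S-shaped; an arc of minimal label cannot cut through the interior of a polygon of $P$, by the black/white coloring argument, nor end badly at a vertex of $P$). With both $\LWcr(\beta^{-}\beta)=\LWcr(\beta)-1$ and $\SWcr(\beta^{-}\beta)\geqslant\SWcr(\beta)\geqslant 0$ in hand, the process terminates at $\SWcr=\LWcr$, where the diagram crosses no wall and the braid is $\delta^{k}$ with $k=\SWcr\geqslant 0$; unwinding gives $\beta$ as a product of exactly $\LWcr(\beta)$ positive dual-simple braids, and dual-positivity of the intermediate braids is never invoked. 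Your proposal contains neither this change of induction invariant nor the minimal-label analysis that supports it, so as written the argument is incomplete; to repair it you would either have to supply a genuine proof of $s\preccurlyeq_{\Sigma^{*}}\beta$ (hard, and not sketched convincingly) or switch to the paper's invariant, which then forces you to prove the claim about minimal labels.
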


\begin{proof}[Proof of Lemma \ref{lemma:shortproduct}]
From the above description of the action of a dual-positive braid, we see that $\SWcr(\beta)\geqslant 0$, as no negative labels can ever be created from non-negative ones. 

In order to prove the lemma, we distinguish two cases. Firstly, if $\SWcr(\beta)=\LWcr(\beta)$, then no arc of the modified curve diagram $MD_\beta$ crosses a wall. This implies that $\beta$ is a power of $\delta$, more precisely, $\beta=\delta^{\SWcr(\beta)}$, and the lemma is true.

Secondly, if $0\leqslant\SWcr(\beta)<\LWcr(\beta)$, then we proceed inductively. We shall construct a negative dual-simple braid $\beta^{-}$
such that $\LWcr(\beta^{-}\beta)<\LWcr(\beta)$ and $\SWcr(\beta^{-}\beta)\geqslant \SWcr(\beta)$.

Consider the arc segments having the maximal wall crossing labeling.
Each such segment connects two walls.
Let $I=\{(i,j)\}$ be the pairs of walls which are connected by some maximal labeled arcs. Let $P$ be the minimal (with respect to inclusion) collection of convex polygons which contains all straight lines connecting $p_{i}$ and $p_{j}$ for $(i,j) \in I$. 
Let $\beta^{-}$ be the inverse of the dual-simple braid that corresponds to~$P$. 
Garside theoretically speaking, $\beta^{-} = (\bigvee_{(i,j)\in I} a_{i,j})^{-1}$.

According to our description above of the $\beta^-$-action on the curve diagram $D_\beta$, the action of $\beta^{-}$ decreases all the maximal wall crossing labelings by one, so $\LWcr(\beta^{-}\beta) = \LWcr(\beta) -1$. 

On the other hand, we claim that $\SWcr(\beta^{-}\beta) \geqslant \SWcr(\beta)$, i.e., contrary to the largest label, the smallest label does not decrease during the $\beta^-$-action. 

Let us prove this claim. First we observe that, roughly speaking, minimally labeled arcs are S-shaped, whereas maximally labeled arcs are \reflectbox{S}-shaped. More precisely, when both endpoints of a minimally labeled arc lie in the interior of walls, then the initial and terminal segment of the arc lie on the counterclockwise sides of the wall, whereas maximally labeled arcs begin and terminate on clockwise sides of the respective walls (see Figure~\ref{fig:nominlabel}(a)).

Now, in order to prove the claim, we have to rule out the existence of minimally labeled arcs (i.e.\ arcs labeled $\SWcr(\beta)$) which, under the $\beta^{-}$-action, give rise to arcs with an even smaller label.

First we observe that a minimally labeled arc~$\alpha$ cannot intersect the interior of any of the polygons of~$P$. Indeed, assume that an arc segment~$\alpha$ enters into the interior of one of these polygons, say $P_1$. Let us assume in addition that both endpoints of $\alpha$ lie in the interior of walls, not in punctures.
Then~$\alpha$ cuts $P_1$ into two components, and thus separates the vertex punctures of~$P_1$ into two families (drawn white and black in Figure~\ref{fig:nominlabel}(a)). Now we observe that no maximally labeled arc can connect a wall belonging to a black puncture to a wall belonging to a white puncture. But, by construction of~$P$, that means that the white and black punctures do not belong to the same polygon~$P_1$, which is a contradiction.

\begin{figure}[htbp]
 \begin{center}
 \SetLabels
(0.02*0.965) forbidden\\
(0.47*0.9) $\alpha$\\
(0.4*0.85) $\beta^{-}(\alpha)$\\
(0.4*0.965) allowed:\\
(0.94*0.965) forbidden\\
(0.31*0.12) Example of a\\
(0.32*0.01) max.\ labeled arc\\
(0.15*0.4) $\alpha$\\
(0*0.65) {\bf (a)}\\
(0.36*0.65) {\bf (b)}\\
(0.71*0.65) {\bf (c)}\\
\endSetLabels
\strut\AffixLabels{\includegraphics*[scale=0.5, width=110mm]{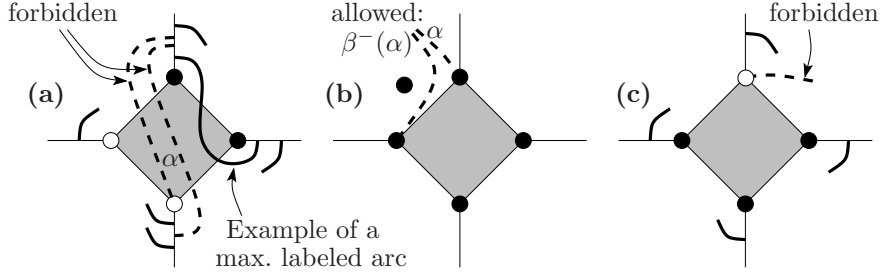}}
\caption{Fat dashed lines indicate minimally \Wcr-labeled arcs, and fat solid lines indicate maximally \Wcr-labeled arcs}
 \label{fig:nominlabel}
  \end{center}
\end{figure}

Similarly, when a minimally labeled arc~$\alpha$ intersects the interior of~$P$ but has one or both endpoints in punctures, then the same argument applies, we only have to decide in which color to paint a puncture at the extremity of~$\alpha$. The choice which works is to group such a puncture with the following punctures in the clockwise direction -- see Figure~\ref{fig:nominlabel}(a).

Finally, we have to consider a minimally labeled arc segment~$\alpha$ which ends in a vertex of~$P_1$ without intersecting the interior of~$P_1$. Here we have to distinguish two cases. If an extremal segment of $\alpha$ lies on the \emph{counterclockwise} side of the wall corresponding to its terminal puncture, as in Figure~\ref{fig:nominlabel}(b), then there is nothing to worry about since the $\beta^{-}$-action does not decrease its $\Wcr$-label. If, on the other hand, a terminal segment of~$\alpha$ lies on the \emph{clockwise} side of the wall corresponding to its terminal puncture, as in Figure~\ref{fig:nominlabel}(c), then this wall cannot be connected to a wall of any of the other punctures of~$P_1$ by a maximally labeled arc. Again, this contradicts the construction of~$P_1$.

In summary, no minimally labeled arc can generate an even smaller label under the $\beta^{-}$-action. This proves the claim, and thus Lemma~\ref{lemma:shortproduct}.
\end{proof}

To summarize, we have now proved that for a dual-positive braid $\beta$, the following equality holds:
\[ \LWcr(\beta) = l_{\Sigma^{*}}(\beta)\]
For a dual-positive braid $\beta$, $\inf_{\Sigma^{*}}(\beta)\geqslant 0$ so by Proposition \ref{prop:dGlength} we conclude that
\[  \LWcr(\beta) = \sup\!{}_{\Sigma^{*}}(\beta). \]
In a similar way, we prove the equality $ \SWcr(\beta) = \inf_{\Sigma^{*}}(\beta)$ for any dual-\emph{negative} braid~$\beta$.

In order to prove the same results for arbitrary braids, we recall that 
the dual Garside element $\delta$ acts as the
clockwise $\frac{2\pi}{n}$--rotation of 
the $n$-gon with vertices in all punctures. Thus the curve diagram $D_{\delta\beta}$ is obtained from $D_\beta$ simply by a $\frac{2\pi}{n}$--rotation,
and the wall crossing labeling on each arc segment of $D_{\delta\beta}$ is obtained from the label of the corresponding arc of $D_\beta$ by adding one. 
On the other hand, by definition of $\inf$ and $\sup$, left multiplication by $\delta$ increases both $\inf$ and $\sup$ by one.  Therefore for a general braid $\beta$, we get an equality
\begin{eqnarray*}
\LWcr(\beta) & = & \LWcr(\delta^{-\inf \! {}_{\Sigma^{*}}(\beta)}\beta) + \inf \! {}_{\Sigma^{*}}(\beta)\\
  & = & \sup \!{}_{\Sigma^{*}}(\delta^{-\inf \! {}_{\Sigma^{*}}(\beta)}\beta) + \inf \!{}_{\Sigma^{*}}(\beta)\\
 & = &  \sup \! {}_{\Sigma^{*}}(\beta).
\end{eqnarray*} 
A similar calculation also yields
\[ \SWcr(\beta) = \textstyle{\inf_{\Sigma^*}(\beta)}. \]
for a general braid $\beta$.
Finally, Proposition~\ref{prop:dGlength} implies the third equality claimed in Theorem~\ref{thm:wall}. This completes the proof of Theorem~\ref{thm:wall}.
\end{proof}

\begin{rem}
In \cite{w} the second author defined another labeling on the curve diagram called the {\em winding number labeling}, and proved the similar formula for the winding number labeling and the usual Garside length (\cite[Theorem 2.1]{w}). The proof of Theorem \ref{thm:wall} given here is a direct generalization of the proof of \cite[Theorem 2.1]{w}.
\end{rem}

\section{Noodle-fork pairing and wall crossing labeling}

In this section we make the observation (in Lemma~\ref{lem:Wcr=q}) that the wall crossing labeling reflects the exponents of the variable $q$ in the noodle-fork pairing.

We need some technical preparation for this. We first show that for any straight fork $F_{i,i+1}$, and for any braid~$\beta$, the tine $T(\beta(F_{i,i+1}))$ can in a natural way be equipped with the wall crossing labeling. For this, we will have to relate curve diagrams and forks.

Let us consider the part of the curve diagram $D_{\beta}$ that is the image of the line segment between the $i$-th and $(i+1)$-st punctures. We identify this part of the curve diagram with $\beta(T(F_{i,i+1}))$, the image of the tine of the straight fork $F_{i,i+1}$. Moreover, up to moving the footpoint from $-1$ to $d_1$, part of the modified curve diagram can naturally be regarded as the handle of $\beta(F_{i,i+1})$, as shown in Figure~\ref{fig:cdtofork}. This identification also induces the desired wall crossing labelling on each arc segment of $\beta(F_{i,i+1})$.

\begin{figure}[htbp]
 \begin{center}
 \SetLabels
(0.3*0.3)    $\overline{MD}_{\textsf{id}}$\\
(0.85*0.3)    $\textsf{id}(F_{3,4})$\\
(0.78*-0.05)    $d_{1}$\\
\endSetLabels
\strut\AffixLabels{\includegraphics*[scale=0.5, width=80mm]{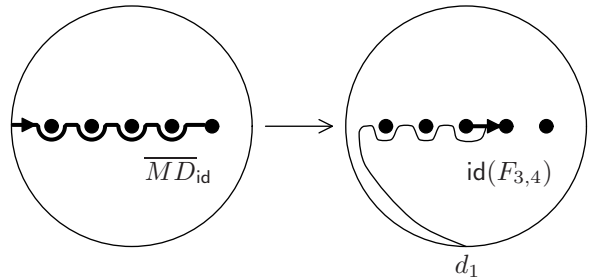}}
\caption{Viewing a curve diagram as a union of tines of forks, and viewing initial segments of modified curve diagrams as tines.}
\label{fig:cdtofork}
\end{center}
\end{figure}

Let $N$ be a standard noodle, and let $F$ be a straight fork with tine $T(F)$. Consider the image $\beta(F)$ of a straight fork~$F$. 
From now on, we always assume that~$N$ is isotoped so that it intersects $\beta(T(F))$ minimally.
For an intersection point $z_{i} \in \beta(T(F)) \cap N$, we denote the wall crossing labeling of the arc segment of $\beta(T(F))$ containing $z_{i}$ by $\Wcr(z_{i})$. We recall that the union of the walls is denoted~$W$. We also recall that the intersection pairing of~$N$ and~$\beta(F)$ is $\langle N,\beta(F)\rangle=\sum_{1\leqslant i,j\leqslant m} \varepsilon_{i,j}q^{a_{i,j}}t^{b_{i,j}}$ where the sum ranges over all pairs of intersection points $N\cap \beta(T(F))$.

\begin{lem}
\label{lem:Wcr=q}
Let $F$ be a straight fork and let $N$ be a standard noodle. Let $\{z_{i}\}$ be the set of intersection points of $\beta(T(F))$ and $N$. Then $a_{i,i}$, the exponent of $q$ at the intersection point $\{z_{i},z'_{i}\}$ is given as follows. 
\[ a_{i,i} =\left\{ \begin{array}{ll}
2\Wcr(z_{i})+1 & \textrm{if } z_{i} \textrm{ and } d_{1} \textrm{ belong to the same component of } N-W \\
2\Wcr(z_{i})-1 & \textrm{if } z_{i} \textrm{ and } d_{2} \textrm{ belong to the same component of } N-W.
\end{array}
\right.
\]
\end{lem}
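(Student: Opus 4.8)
The plan is to read the exponent $a_{i,i}$ as a total winding number and then convert that winding number into a signed count of wall crossings, which is exactly how the labeling $\Wcr$ is defined. Recall that $a_{i,i}$ is the exponent of $q$ in $\phi(l)$, where $l=\{C,C'\}\{B,B'\}\{A,A'\}$ is the loop in the configuration space $C$ whose first coordinate runs from $d_{1}$ through the branch point to $z_{i}$ and then out to $d_{k}$ (along $A$, then $B$, then $C$), and whose second coordinate runs from $d_{2}$ through the branch point of $F'$ to $z'_{i}$ and then to $d_{k'}$ (along $A'$, then $B'$, then $C'$). By the definition of $\phi$, this exponent equals the sum over all punctures $p_{j}$ of the clockwise winding numbers of \emph{both} coordinate paths around $p_{j}$. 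The first step is therefore to invoke the classical fact that the winding number of a closed loop around $p_{j}$ equals the signed number of times it crosses the upward ray $W_{j}$; summing over $j$, the exponent $a_{i,i}$ equals the algebraic intersection number of the traced-out $1$-cycle with the union of walls $W$. Our sign conventions are chosen precisely so that this signed wall-crossing count agrees with the one defining $\Wcr$.

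Next I would split the $1$-cycle into its four natural pieces and count wall crossings additively. By the identification of curve diagrams with forks set up just before the lemma (see Figure~\ref{fig:cdtofork}), the concatenation $A\cdot B$ is exactly the initial segment of the modified curve diagram running from the footpoint to $z_{i}$, so its signed wall-crossing count is $\Wcr(z_{i})$ by definition. The parallel fork $F'$ is an infinitesimal pushoff of $F$ with footpoint $d_{2}$ in place of $d_{1}$, so $A'\cdot B'$ runs parallel to $A\cdot B$ and crosses each wall in exactly the same way; hence it also contributes $\Wcr(z_{i})$. These two pieces together account for $2\Wcr(z_{i})$, and it remains to show that the two noodle pieces $C$ and $C'$ contribute exactly $+1$ or $-1$.

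The crux is the wall-crossing count of $C$ and $C'$, which both lie on the standard noodle $N=N_{k}$. The key geometric input is that $N_{k}$ meets $W$ in a single point $w\in W_{k}$ (approaching $p_{k}$ from below, $N_{k}$ crosses no other upward wall), and that $w$ does not lie in the tiny arc of $N$ between the adjacent points $z_{i}$ and $z'_{i}$. Orienting $N$ from $d_{1}$ to $d_{2}$, the point $w$ separates $N$ into a $d_{1}$-component and a $d_{2}$-component, and the hypothesis of the lemma is precisely the statement of which of these contains $z_{i}$. Using the rule defining $C$ and $C'$ (they run from $z_{i}$, $z'_{i}$ to the two \emph{different} basepoints, with $C$ choosing the direction that avoids $z'_{i}$), a short case analysis on $c_{i,i}=\pm1$ and on the side of $w$ on which $z_{i}$ sits shows that exactly one of $C,C'$ passes through $w$, traversed in a definite direction, while the other crosses no wall. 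This yields $\mathrm{cr}(C)+\mathrm{cr}(C')=+1$ when $z_{i}$ lies in the $d_{1}$-component and $-1$ when it lies in the $d_{2}$-component, once the single crossing $w$ is counted with sign $+1$, as is forced by the clockwise convention.

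Combining the three paragraphs gives $a_{i,i}=2\Wcr(z_{i})\pm1$, as claimed. I expect the main obstacle to be the orientation and sign bookkeeping in this last step: one must track the direction in which $C$ and $C'$ traverse $N$ (which depends on $c_{i,i}$), the sign of the crossing $w$ (fixed by the clockwise winding convention), and the compatibility between the two notions of signed wall crossing, the one coming from winding numbers and the one defining $\Wcr$. The remaining points — that $A'B'$ is a genuine parallel pushoff with identical crossings, that $N_{k}$ meets $W$ only once, and that no wall separates $z_{i}$ from $z'_{i}$ — are routine general-position arguments, and the exponent $b$ of the variable $t$ plays no role and can be ignored throughout.
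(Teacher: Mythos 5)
Your proposal is correct and follows essentially the same route as the paper's proof: interpret $a_{i,i}$ as a sum of winding numbers, convert these to algebraic intersection numbers with the walls, observe that the fork parts $\{BA, B'A'\}$ contribute $2\Wcr(z_i)$ via the identification of the fork with the modified curve diagram, and that the noodle parts $\{C,C'\}$ contribute $\pm 1$ according to which component of $N-W$ contains $z_i$. Your write-up is in fact slightly more explicit than the paper's (which delegates the noodle contribution and the sign bookkeeping to a figure), notably in isolating the facts that $N$ meets $W$ in a single point and that this point does not separate $z_i$ from $z'_i$, and in carrying out the case analysis over $c_{i,i}=\pm 1$.
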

\begin{proof}
Recall that $a_{i,i}$ is defined as the sum of the winding numbers of the paths $CBA$ and $C'B'A'$ around each puncture:
\[ a_{i,i} = - \frac{1}{2 \pi i} \sum_{j=1}^{n}\left( \int_{CBA}\frac{dz}{z-p_{j}} + \int_{C'B'A'}\frac{dz}{z-p_{j}}\right) \]
where $A,A',B,B',C,C'$ are the paths defined in Section \ref{sec:pairing}. See Figure \ref{fig:Wcr}.

\begin{figure}[htbp]
 \begin{center}
 \SetLabels
 (0.08*0.2)    $A$\\
(0.26*0.2)    $A'$\\
(0.07*0.68)    $B$\\
(0.07*0.42)    $B'$\\
(0.35*0.5)    $C$\\
(0.18*0.3)    $C'$\\
(0.18*0.68)    $z_{i}$\\
(0.18*0.42)    $z'_{i}$\\
(0.15*0.0)    $d_{1}$\\
(0.31*0.0)    $d_{2}$\\
 (0.62*0.2)    $A$\\
(0.78*0.2)    $A'$\\
(0.8*0.68)    $B$\\
(0.8*0.42)    $B'$\\
(0.62*0.5)    $C$\\
(0.89*0.3)    $C'$\\
(0.87*0.68)    $z_{i}$\\
(0.87*0.42)    $z'_{i}$\\
(0.66*0.0)    $d_{1}$\\
(0.83*0.0)    $d_{2}$\\
(0.2*0.84)    $W$\\
(0.72*0.84)    $W$\\
\endSetLabels
\strut\AffixLabels{\includegraphics*[scale=0.5, width=80mm]{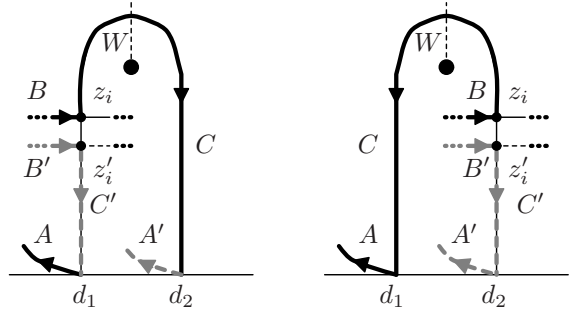}}
   \caption{Contribution of the noodle part}
 \label{fig:Wcr}
  \end{center}
\end{figure}

For a closed loop $\gamma$ in $D_{n}$, the winding number around the puncture $p_{j}$
\[ - \frac{1}{2 \pi i} \int_{\gamma} \frac{dz}{z-p_{j}}\]
is equal to the algebraic intersection number of $\gamma$ and $W_j$.
Thus, the fork part of the path $\{CBA,C'B'A'\}$, namely the subarc $\{BA,B'A'\}$, contributes to the total winding number $a_{i,i}$ by $2\Wcr(z_{i})$. Finally, we observe that the rest of the loop $\{C,C'\}$ (the noodle $N$ part of the path $\{CBA,C'B'A'\}$) contributes to 
the total winding number $a_{i,i}$ by $+1$ (respectively $-1$) if $z_{i}$ and $d_{1}$ belongs to the same (respectively to different) components of $N-(Z \cup W)$ -- see Figure~\ref{fig:Wcr}. This completes the proof.
\end{proof}

\begin{rem}
For the winding number labeling introduced in \cite{w}, and for $b_{i,i}$, the exponent of $t$ at the intersection $\{ z_{i},z'_{i} \}$, one can get a similar formula by similar arguments.  
Thus, schematically speaking, we have the following correspondence among three objects in braid groups:\\

\begin{tabular}[htbp]{|c|c|c|}
\hline
 LKB representation & Curve diagram &  Garside structure \\
\hline
\hline
Variable $t$ & Winding number labeling & Usual Garside structure \\
\hline
Variable $q$ & Wall crossing labeling & Dual Garside structure\\
\hline
\end{tabular}\\

Unfortunately, we did not manage to reprove Krammers result from \cite{k2}, i.e.\ the analogue of Theorem~\ref{thm:q} for the variable $t$ and the usual Garside length, using our geometric techniques.
\end{rem}

\section{The dual Garside length formula}

In this section we prove Theorem \ref{thm:q}.
For monomials $q^{a}t^{b}$ and $q^{a'}t^{b'}$ we define the lexicographical ordering $\leqslant_{q,t}$ by 
\[ q^{a}t^{b} \leqslant_{q,t} q^{a'}t^{b'} \textrm{ \ \ \ if }a < a', \textrm{ or if } a=a' \textrm{ and } b \leqslant b'.\]  
The next lemma is the crucial result in Bigelow's proof of faithfulness.

\begin{lem}\cite[Bigelow's Key Lemma 3.2 and Claim 3.4]{b2}
\label{lem:non-vanishing_q}
Assume that a noodle~$N$ and a fork $F$ have the minimal geometric intersection.
Then all intersection points $\{z_{i},z'_{j}\}$ of $N$ with $F$ which attain the $<_{q,t}$-maximal monomial $m_{i,j}$ in $\langle N,F \rangle$ have the same sign $\varepsilon_{i,j}$. 
\end{lem}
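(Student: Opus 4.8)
The plan is to recover Bigelow's geometric argument, organising it around a decomposition of each monomial into contributions that are local to the two crossing points. First I would write $m_{i,j}=\phi(l_{i,j})$, where $l_{i,j}$ is the loop $l=\{C,C'\}\{B,B'\}\{A,A'\}$ of Section~\ref{sec:pairing} (with the dependence on the pair made explicit), and split the winding-number integrals computing $a_{i,j}$ and $b_{i,j}$ over the six constituent paths. The handle paths $A,A'$ give a contribution independent of $(i,j)$; the tine paths $B,B'$ give contributions depending only on the individual points $z_i$ and $z'_j$; and the noodle paths $C,C'$ give contributions depending on $z_i$, $z'_j$ together with the integer $c_{i,j}\in\{\pm1\}$, which is the sole source of coupling between the two indices. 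This reduces the study of $m_{i,j}$ to tracking, for each crossing, a winding ``height'' along the fork and along the noodle, corrected by the side datum $c_{i,j}$.

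Next I would pin down the $<_{q,t}$-maximal monomial. Since the ordering compares the $q$-exponent first, I would maximise $a_{i,j}$, interpreting it through winding numbers exactly as in the proof of Lemma~\ref{lem:Wcr=q}: the maximum is realised by crossings that are extremal for the total winding, and one checks from the decomposition that these are nested along $N$ in a definite pattern. Among the pairs realising the maximal $a_{i,j}$ I would then maximise the $t$-exponent $b_{i,j}$ by the same bookkeeping, obtaining a (possibly non-singleton) set $S$ of pairs $(i,j)$ realising the top monomial. When $S$ is a singleton -- the generic situation, for instance the example of Figure~\ref{fig:example} -- the statement is immediate, so the content lies entirely in the degenerate case of ties.

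The heart of the proof is then to show that $\varepsilon_{i,j}=-s\,s'\,c_{i,j}$ is constant on $S$. The geometric point is that all crossings selected by the maximisation lie on the same side of $N$ and meet it with a coherent orientation, which pins down the local signs $s$ and $s'$, while the nesting of these crossings along $N$ forces $c_{i,j}$ to a single value. I would make this precise by a contradiction argument: given two maximal pairs whose signs differed, the coincidence of their monomials together with the winding computation would produce an embedded bigon (or innermost disc) between $N$ and the tine, which could be removed so as to lower the geometric intersection number, contradicting the hypothesis that $N$ and $F$ are in minimal position.

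The step I expect to be the main obstacle is exactly this last sign-consistency claim, which is Bigelow's Claim~3.4: ruling out cancellation amounts to ruling out that $c_{i,j}$ varies among the maximal pairs in a way that flips $\varepsilon_{i,j}$. This is delicate precisely because $c_{i,j}$ is a \emph{global} separation datum of the noodle rather than local data at the crossings, so the minimal-position hypothesis is indispensable -- it is what forbids the bigons that would otherwise allow the top-degree terms to cancel. For the straight-fork case relevant to Theorem~\ref{thm:q} one could alternatively read the $q$-heights off Lemma~\ref{lem:Wcr=q} directly, but the sign analysis above is what makes the statement work for an arbitrary fork.
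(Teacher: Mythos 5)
This lemma is not actually proved in the paper: it is quoted from Bigelow \cite{b2} (his Key Lemma 3.2 and Claim 3.4), so the benchmark for your proposal is Bigelow's original argument. Your setup is faithful to that argument: decomposing $a_{i,j}$ and $b_{i,j}$ into handle, tine and noodle contributions, with $c_{i,j}$ as the only coupling between the two indices, is exactly the content of the Basic Lemma as recalled in Section~\ref{sec:pairing}, and your reduction of the lemma to sign-coherence on the set $S$ of maximal pairs is the right reduction.

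The gap is that the sign-coherence step itself --- which you correctly identify as ``the main obstacle'' --- is asserted rather than proved, and both mechanisms you offer for it are unsound as stated. First, maximality does \emph{not} force the selected crossings to meet $N$ ``with a coherent orientation'': the local signs $s$ alternate along the embedded curves and need not be constant among maximal pairs. What makes the lemma true is a parity compensation, not orientation coherence: for instance $c_{i,j}=(-1)^{b_{i,j}}$, because the parity of $b_{i,j}$ records whether the loop $\{C,C'\}\{B,B'\}\{A,A'\}$ swaps the two points, and the sign $s_i$ is likewise tied to a parity of exponents via the position of the parallel point $z_i'$; the sign $\varepsilon_{i,j}=-ss'c_{i,j}$ is then a function of exponent parities, and one still has to relate those parities for two distinct maximal pairs. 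Second, and more seriously, your contradiction step --- that two maximal pairs with equal monomials and opposite signs ``would produce an embedded bigon'' --- does not follow from the winding computation. Equality of monomials only says that certain \emph{algebraic} winding numbers vanish (the total winding about all punctures and the relative winding of the two points); a region bounded by an arc of $T(F)$ and an arc of $N$ with zero algebraic winding can perfectly well contain punctures whose contributions cancel, in which case there is no removable bigon and minimal position is not contradicted. Upgrading this algebraic vanishing to an honest empty bigon is precisely the hard content of Bigelow's Claims 3.3--3.4, and it is where the lexicographic maximality (top $q$-degree first, then top $t$-degree) has to be used in an essential, not routine, way. So your proposal frames the problem correctly and locates the difficulty, but the core of the lemma is left unproven, and the one-line justification offered for it would fail.
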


Roughly speaking, Lemma~\ref{lem:non-vanishing_q} states that the $<_{q,t}$-maximal contributions to $\langle N,F\rangle$ do not cancel.
%By Lemma \ref{lem:Wcr=q}, Bigelow's key lemma implies the following non-vanishing result on the maximal $q$-degree term.

\begin{lem}
\label{lem:Bigelow}
Let $\beta \in B_{n}$ be a braid. Let $F$ be a straight fork $F$ such that $\beta(T(F))$ contains an arc segment having the largest wall crossing labeling $\LWcr(\beta)$. Then there exists a standard noodle $N$ such that  
\[ M_{q}(\langle N, \beta(F) \rangle) \geqslant 2\LWcr(\beta) + 1. \]
%holds.
\end{lem}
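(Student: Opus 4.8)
The plan is to exhibit a standard noodle $N$ for which a specific intersection point $z_i$ on $\beta(T(F))$ achieves the maximal wall crossing label $\LWcr(\beta)$, and then to use Lemma~\ref{lem:Wcr=q} together with the non-cancellation result of Lemma~\ref{lem:non-vanishing_q} to conclude that this large power of $q$ genuinely survives in the pairing. By hypothesis some arc segment $\alpha$ of $\beta(T(F))$ carries the label $\LWcr(\beta)$. I would choose the standard noodle $N=N_k$ so that it crosses this maximally labeled arc segment, arranging $N$ to be isotoped into minimal position with $\beta(T(F))$, and so that the relevant crossing point $z_i$ lies in $\alpha$ on the side where $z_i$ and $d_1$ belong to the same component of $N-W$. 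With this choice, Lemma~\ref{lem:Wcr=q} gives $a_{i,i}=2\Wcr(z_i)+1 = 2\LWcr(\beta)+1$, so the diagonal monomial $m_{i,i}$ already realizes the desired power of $q$.

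The issue, of course, is that $a_{i,i}=2\LWcr(\beta)+1$ for a single intersection point does not immediately yield $M_q(\langle N,\beta(F)\rangle)\geqslant 2\LWcr(\beta)+1$, because the corresponding contributions $\varepsilon_{i,j}m_{i,j}$ coming from all pairs $\{z_i,z'_j\}$ might cancel. So the key step is to show that among all pairs contributing monomials with the top $q$-degree, the signs $\varepsilon_{i,j}$ cannot all conspire to cancel. This is exactly where Lemma~\ref{lem:non-vanishing_q} enters: I would verify that the diagonal monomial $m_{i,i}=q^{2\LWcr(\beta)+1}t^{b_{i,i}}$ is $\leqslant_{q,t}$-maximal (or can be made so by choosing $N$ appropriately, since the $q$-exponent dominates in the lexicographic order $\leqslant_{q,t}$), and then invoke the Basic Lemma to conclude that all intersection points attaining this maximal monomial share a common sign. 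Since signs agree, no cancellation occurs and the coefficient of $q^{2\LWcr(\beta)+1}$ in $\langle N,\beta(F)\rangle$ is nonzero, giving $M_q(\langle N,\beta(F)\rangle)\geqslant 2\LWcr(\beta)+1$.

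The main obstacle I anticipate is verifying that the point $z_i$ realizing the label $\LWcr(\beta)$ can be chosen so that its diagonal monomial is actually the $\leqslant_{q,t}$-maximal monomial in the whole pairing, rather than merely large. In principle some off-diagonal pair $\{z_i,z'_j\}$ with $i\neq j$ could produce an even higher $q$-degree; I would need to bound $a_{i,j}$ for $i\neq j$, using the formula for $m_{i,j}$ as $\phi$ of the loop $\{C,C'\}\{B,B'\}\{A,A'\}$, and argue that the contributions of the tine parts $B,B'$ and the noodle parts $C,C'$ cannot exceed $2\LWcr(\beta)+1$. Concretely, I expect the winding-number computation underlying Lemma~\ref{lem:Wcr=q} to extend to the off-diagonal case, showing that $a_{i,j}$ is governed by the labels $\Wcr(z_i)$ and $\Wcr(z_j)$ together with a bounded noodle correction, so that the top $q$-degree is attained precisely at diagonal points sitting on maximally labeled arc segments. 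Once this is established, Lemma~\ref{lem:non-vanishing_q} finishes the argument.
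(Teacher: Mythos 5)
Your overall strategy coincides with the paper's reduction step: exhibit an intersection point $z$ on a maximally labeled arc with $\Wcr(z)=\LWcr(\beta)$ whose diagonal contribution has $q$-exponent $2\LWcr(\beta)+1$ via Lemma~\ref{lem:Wcr=q}, then invoke Lemma~\ref{lem:non-vanishing_q} to prevent cancellation. The genuine gap is at the crucial geometric step: you simply declare that the noodle can be chosen, and isotoped into minimal position, so that the crossing point $z_i$ and $d_1$ lie in the same component of $N-W$. That side condition is precisely what the lemma's proof must establish, and it cannot be arranged by fiat. The hypothesis only provides an arc segment $\gamma$ carrying the label $\LWcr(\beta)$; it can happen that $\gamma$ falls into the puncture $p_i$ and meets the noodle $N_i$ only in points lying in the $d_2$-component of $N_i-(W_i\cap N_i)$, in which case Lemma~\ref{lem:Wcr=q} gives the exponent $2\LWcr(\beta)-1$, short of what is needed, and no isotopy of $N_i$ preserving minimal position changes this. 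The paper resolves exactly this difficulty by a case analysis (Figure~\ref{fig:intersection}): if $\gamma$ does not fall into $p_i$, a second intersection point on the $d_1$-side exists; if $\gamma$ does fall into $p_i$, one argues that $\gamma$ must pass under the adjacent puncture $p_{i+1}$ --- using both the maximality of the label and minimality of intersection, since otherwise one finds an arc with a strictly larger label or a bigon --- and then the \emph{different} standard noodle $N_{i+1}$ supplies the desired point. Without an argument of this kind, your proposal establishes only the (comparatively easy) reduction, not the lemma.

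By contrast, the obstacle you single out as the main one --- proving that the diagonal monomial $m_{i,i}$ is the $\leqslant_{q,t}$-maximal monomial of the whole pairing by bounding the off-diagonal exponents $a_{i,j}$ --- is not needed at all, which is fortunate since you leave it as an unverified expectation. Because $\leqslant_{q,t}$ is lexicographic with the $q$-exponent taking priority, whatever monomial $m_{k,l}$ is $\leqslant_{q,t}$-maximal automatically satisfies $a_{k,l}\geqslant a_{i,i}=2\LWcr(\beta)+1$; Lemma~\ref{lem:non-vanishing_q} says all pairs attaining $m_{k,l}$ carry the same sign, so this monomial survives in $\langle N,\beta(F)\rangle$ with nonzero coefficient, whence $M_{q}(\langle N,\beta(F)\rangle)\geqslant a_{k,l}\geqslant 2\LWcr(\beta)+1$. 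This is how the paper applies Bigelow's lemma: one never needs to identify the maximal monomial, only to produce a single contribution with the advertised $q$-exponent. So the off-diagonal analysis should be dropped from your plan, and the effort redirected to the side condition above.
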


\begin{proof} 
Throughout proof, we assume that $\beta(T(F))$ intersects each standard noodle $N_{i}$ minimally.
It is sufficient to show that there is a standard noodle $N_{i}$ and an intersection point $z \in \beta(T(F)) \cap N_{i}$ such that $\Wcr(z)=\LWcr(\beta)$ and such that the two points~$z$ and~$d_{1}$ lie in the same component of $N_{i}-(W_{i} \cap N_{i})$: by Lemma \ref{lem:Wcr=q}, the intersection point $\{z,z'\}$ contributes to the pairing $\langle N_{i},\beta(F)\rangle$ by $\pm q^{2\LWcr(\beta)+1}t^{b}$ for some $b\in\Z$, and then Lemma~\ref{lem:non-vanishing_q} completes the proof.

\begin{figure}[htbp]
 \begin{center}
\SetLabels
(0.0*0.9) (a)\\
(0.03*0.3)  $z_{*}$\\
(0.05*0.0)  $d_{1}$\\
(0.13*0.0)  $d_{2}$\\
(0.14*0.82)  $N_{i}$\\
(0.2*0.9) (b)\\
(0.47*0.9) (c)\\
(0.75*0.9) (d)\\
(0.84*0.84) $N_{i}$\\
(0.97*0.84) $N_{i+1}$\\
(0.9*0.3) $z_{*}$\\
  \endSetLabels
\strut\AffixLabels{\includegraphics*[scale=0.5, width=110mm]{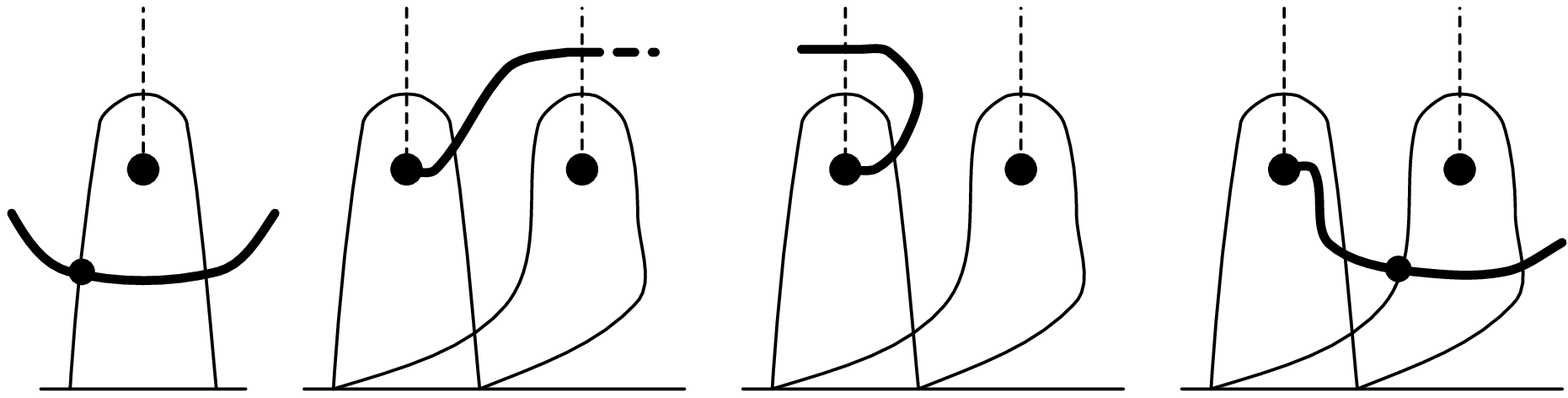}}
 \caption{Intersections achieving $q^{2 \LWcr(\beta)+1}$ exist}
\label{fig:intersection}
  \end{center}
\end{figure}

Let $\gamma$ be an arc segment of $\beta(T(F))$ whose wall crossing labeling is $\LWcr(\beta)$. Take a standard noodle $N_{i}$ 
which intersects $\gamma$ in a point~$z$. Assume that $z$ and $d_{1}$ lie on the two different components of $N_{i}-(W_{i} \cap N_{i})$.
If~$\gamma$ does not fall into the puncture~$p_{i}$, then we can find another intersection point $z_{*}$ of~$\gamma$ with~$N_i$ that lies on the same component of $N_{i}-(W_{i} \cap N_{i})$ as $d_{1}$ (Figure \ref{fig:intersection} (a)) and the proof is complete.
Assume that $\gamma$ falls into the puncture $p_{i}$. Then $\gamma$ must pass under the adjacent puncture $p_{i+1}$ because otherwise we either find another arc segment having strictly larger wall crossing labeling (Figure \ref{fig:intersection} (b)) or contradict the hypothesis that~$N_{i}$ and $\beta(T(F))$ have the minimal intersection (Figure \ref{fig:intersection} (c)). Then the standard noodle $N_{i+1}$ and $\gamma$ have an intersection $z_{*}$ with the desired property (Figure \ref{fig:intersection}(d)).
\end{proof}

\begin{proof}[Proof of Theorem \ref{thm:q}]
For a braid $\beta$, let us take a straight fork $F$ and a standard noodle $N$ as in Lemma \ref{lem:Bigelow}. Thus we have
$M_{q}(\langle N, \beta(F) \rangle) \geqslant 2 \LWcr(\beta) +1$.
Let us write the fork $\beta(F)$ as the linear combination of the standard forks
\[ \beta(F)=\sum_{1\leqslant i < j\leqslant n}x_{i,j} F_{i,j} \;\;\; (x_{i,j} \in \Z[q^{\pm 1},t^{\pm 1}]) .\]
Since $F$ is a straight fork, $x_{i,j}$ is an entry of the matrix $\mL(\beta)$.
Now we have an equality
\[ \langle N, \beta(F) \rangle = \sum_{1\leqslant i < j\leqslant n}x_{i,j} \langle N,F_{i,j} \rangle. \]
Since in Lemma \ref{lem:standpair} we observed that
\[ M_{q}(\langle N,F_{i,j}\rangle) \leqslant 1, \]
we conclude  
\[ \max_{1 \leqslant i < j \leqslant n} \{ M_{q}(x_{i,j})\} \geqslant
M_q(\langle N,\beta(F)\rangle)-1\geqslant
2\LWcr(\beta) = 2\sup\!{}_{\Sigma^{*}}( \beta). \]
where the second inequality follows from Lemma~\ref{lem:Bigelow} and the last equality from  Theorem~\ref{thm:wall}.
Thus, we get an inequality
\[ M_{q}(\mL(\beta)) \geqslant 2 \sup \!{}_{\Sigma^{*}}(\beta) .\]
On the other hand, for each dual-simple element $s \in [1,\delta]$
\[ M_{q}(\mL(s)) \leqslant 2 \]
holds. Hence we get the converse inequality
\[ M_{q}(\mL(\beta)) \leqslant 2 \sup\!{}_{\Sigma^{*}}(\beta). \] 
We conclude that
\[ M_{q}(\mL(\beta)) = 2 \sup\!{}_{\Sigma^{*}} (\beta)\]
The proof of (2) is similar, and (3) follows from Proposition~\ref{prop:dGlength}.
\end{proof}

\end{document}